\newtheorem{thm}{Theorem}
\newtheorem{cor}[thm]{Corollary}
\newtheorem{prop}[thm]{Proposition}
\newtheorem{rem}{Remark}
\newtheorem{defn}{Definition}
\begin{document}
\title[The $\infty$-Besov Capacity Problem]{The $\infty$-Besov Capacity
Problem}
\author{M. Milman}
\address{Instituto Argentino de Matematica, Buenos Aires, Argentina}
\email{mario.milman@gmail.com}
\author{J. Xiao}
\address{Department of Mathematics and Statistics, Memorial University, St.
John's, NL A1C5S7, Canada}
\email{jxiao@mun.ca}
\thanks{MM was partially supported by a grant from the Simons Foundation
(\#207929 to Mario Milman), JX is in part supported by NSERC of Canada.}
\subjclass[2010]{{31C15, 35J92, 42B37, 46E35}}

\begin{abstract}
A theory of $\infty$-Besov capacities is developed and several applications
are provided. In particular, we solve an open problem in the theory of
limits of the $\infty$-Besov semi-norms, we obtain new restriction-extension
inequalities, and we characterize the point-wise multipliers acting on the $%
\infty$-Besov spaces.
\end{abstract}

\maketitle





\section{Introduction}

\label{s1}

In this paper (cf. Section $2$ below) we introduce a new\footnote{%
The theory builds from previous works (cf. \cite{A, AX, X1, X2, XY, XZ, W, C}%
).} theory of capacities and perimeters associated to the Besov spaces $%
\Lambda _{\alpha }^{p,q},$with parameters $(\alpha ,p,q)\in (0,1)\times
\lbrack 1,\infty ]\times \lbrack 1,\infty ],$ with particular emphasis on
the case $q=\infty $ (the $\infty -$Besov spaces)$.$ Our theory has
interesting applications: In Section $3$ we apply it to characterize the
restrictions and extensions of the $\infty $-Besov functions, and in Section 
$4$, we provide a characterization of the point-wise multipliers for the $%
(\alpha ,p,\infty )$-Besov spaces. There are, of course, many other
interesting connections. For example, we mention that the corresponding
spaces of traces are naturally linked to the theory of function spaces based
on outer measures that was recently developed in \cite{DT} (cf. Section $3$%
). Moreover, and somewhat surprisingly, the $\infty $-Besov spaces can be
embedded in the Campanato spaces (cf. Remark \ref{r21b} below).

Another interesting application occurs when dealing with an open end point
problem in the theory of limits of Besov or fractional Sobolev norms. We
shall now develop this application in some detail, as we believe it offers a
nice application and an introduction to some of underlying issues dealt with
in this paper.

The limiting inequalities we intend to extend originated in applications to
PDEs, and were considered by a number of authors (cf. \cite{BBM}, \cite{BBM1}%
, \cite{MAS}, \cite{MAS1}, \cite{KL}). For example,
Bourgain-Brezis-Mironescu \cite{BBM1} show that 
\begin{equation*}
\lim_{\alpha \rightarrow 1}(1-\alpha )\|f\|^p_{\dot{\Lambda}%
^{p,p}_\alpha}=\lim_{\alpha \rightarrow 1}(1-\alpha)\int_{\mathbb{R}^n}\int_{%
\mathbb{R}^n}\frac{\left\vert f(x)-f(y)\right\vert ^{p}}{\left\vert
x-y\right\vert ^{n+\alpha p}}\,dxdy=\left(\int_{\mathbb{S}%
^{n-1}}|\cos\theta|^p\,d\sigma\right)\left\Vert \nabla f\right\Vert
_{L^{p}}^{p},
\end{equation*}%
where $p\in \lbrack 1,\infty )$, $\mathbb{S}^{n-1}$ is the unit sphere of $%
\mathbb{R}^{n\ge 1}$, $\theta$ is the angle derivation from the vertical,
and $d\sigma$ is the standard surface area measure. In \cite{MAS, MAS1}
Maz'ya-Shaposhnikova show 
\begin{equation*}
\lim_{\alpha \rightarrow 0}\alpha\|f\|^p_{\dot{\Lambda}^{p,p}_\alpha}=\lim_{%
\alpha\rightarrow 0}\alpha \int_{\mathbb{R}^{n}}\int_{\mathbb{R}^{n}} \frac{%
\left\vert f(x)-f(y)\right\vert ^{p}}{\left\vert x-y\right\vert ^{n+\alpha p}%
}dxdy=2p^{-1}\sigma _{n-1}\left\Vert f\right\Vert _{L^{p}}^{p},
\end{equation*}%
where $p\in \lbrack 1,\infty ),$ and $\sigma _{n-1}$ denotes the surface
measure of $\mathbb{S}^{n-1}.$ A related inequality obtained in \cite{BBM1},
and sharpened in \cite{MAS}, can be formulated as%
\begin{equation*}
\left\Vert f\right\Vert _{L^{\frac{pn}{n-\alpha p}}}\leq \left(c(n,p)
\alpha(1-\alpha )(n-\alpha p)^{1-p}\right) ^{\frac{1}{p}}\left\Vert
f\right\Vert _{\dot{\Lambda}_\alpha^{p,p}},
\end{equation*}%
where $(\alpha ,p)\in (0,1)\times \lbrack 1,n/\alpha)$ and $c(n,p)$ is a
constant depending only on $n$ and $p$. To derive each of these results
required a new understanding about fractional norms\footnote{%
See also Remark \ref{r31}(ii) below.}.

From a more general point of view, the Besov spaces $\Lambda _{\alpha
}^{p,q} $ can be also obtained by real interpolation and, as it turns out,
the limiting formulae above can be also understood in the more general
setting of interpolation theory (cf. \cite{M}). In particular, this point of
view led us to extend these limiting theorems to higher order norms (cf. 
\cite{KMX}). Further results in this direction have been also obtained by
Triebel \cite{tri}.

A natural question that has remained open is the characterization of the
limits of the homogeneous Besov norms $\Vert \cdot \Vert _{\dot{\Lambda}%
_{\alpha }^{p,q}}$ that correspond to the choices $p=\infty $ or $q=\infty .$
As it is well known, the $\infty $-Besov spaces $\Lambda^{p,\infty}_\alpha$
are connected to the Sobolev spaces. Indeed, the if $(\alpha ,p)\in
\{1\}\times (1,\infty )$ then $f$ is in the first-order Sobolev $p$-space $%
W^{1,p}$ if and only if $f\in L^{p}$ and $\sup_{h\in \mathbb{R}%
^{n}}|h|^{-1}\Vert \Delta _{h}f\Vert _{L^{p}}<\infty $ (cf. \cite[Theorem
2.16]{Zi}), moreover, if $\alpha =1=p,$ then $f$ is of bounded variation on $%
\mathbb{R}^{n}$, i.e. $f\in BV$, if and only if $f\in L^{1}$ and $\sup_{h\in 
\mathbb{R}^{n}}|h|^{-1}\Vert \Delta _{h}f\Vert _{L^{1}}<\infty $ (cf. \cite[%
p. 245]{CDDD}).

Returning to the limiting theorems above, let us now show how our theory of
capacities can be used to add a new end point result to the
Bourgain-Brezis-Mironescu-Maz'ya-Shaposhnikova formulae. Let $\Vert \cdot
\Vert _{BV}$ denote the standard BV-norm, and let 
\begin{equation*}
P_{1,1,\infty }(\{x\in \mathbb{R}^{n}:f(x)>t\})=\Vert 1_{\{f>t\}}(\cdot
)\Vert_{\dot{\Lambda}_{\alpha }^{1,\infty }}
\end{equation*}%
(cf. Section $2$). Suppose that $f\in BV(\mathbb{R}^{n})$ and let $\alpha
\in (0,1).$ Since $f\in L^{1}(\mathbb{R}^{n}),$ we have (cf. Proposition \ref%
{p23}(1) and (3) below for more details) that, for each $h\in $ $\mathbb{R}%
^{n},$ 
\begin{align*}
\left\vert h\right\vert ^{-1}\Vert \Delta _{h}f\Vert _{L^{1}}& \leq
\int_{0}^{\infty }|h|^{-1}\Vert 1_{\{f>t\}}(\cdot +h)-1_{\{f>t\}}(\cdot
)\Vert _{L^{1}}\,dt \\
& \leq \int_{0}^{\infty }|h|^{-1}\left\vert h\right\vert P_{1,1,\infty
}(\{x\in \mathbb{R}^{n}:f(x)>t\})\,dt \\
& \approx \left\Vert f\right\Vert _{BV}.
\end{align*}%
Let us write%
\begin{equation*}
\Vert f\Vert _{\dot{\Lambda}_{\alpha }^{\alpha ,\infty }}\leq \sup_{|h|\leq
1}|h|^{-\alpha }\Vert \Delta _{h}f\Vert _{L^{1}}+\sup_{|h|>1}|h|^{-\alpha
}\Vert \Delta _{h}f\Vert _{L^{1}}.
\end{equation*}%
Let $\epsilon >0.$ Then we can find non-zero $h_{1}:=h_{1}(\alpha )\in 
\mathbb{R}^{n},$ with $\left\vert h_{1}\right\vert \leq 1,$ such that 
\begin{equation*}
\Vert f\Vert _{\dot{\Lambda}_{\alpha }^{1,\infty }}<|h_{1}|^{-\alpha }\Vert
\Delta _{h_{1}}f\Vert _{L^{1}}+\sup_{|h|>1}|h|^{-\alpha }\Vert \Delta
_{h}f\Vert _{L^{1}}+\epsilon .  \label{alto}
\end{equation*}%
The first term in the last inequality can be estimated by 
\begin{eqnarray*}
|h_{1}|^{-\alpha }\Vert \Delta _{h_{1}}f\Vert _{L^{1}} &\leq
&|h_{1}|^{1-\alpha }|h_{1}|^{-1}\Vert \Delta _{h_{1}}f\Vert _{L^{1}} \\
&\leq&|h_{1}|^{-1}\Vert \Delta _{h_{1}}f\Vert _{L^{1}} \\
&\lesssim &\left\Vert f\right\Vert _{BV}.
\end{eqnarray*}%
Moreover, using the mean value theorem and noting that the function $%
t\mapsto t^{\alpha }\ln t$ is bounded on $[0,1],$ we see that $\lim_{\alpha
\rightarrow 1}t^{\alpha }=t,$ uniformly on $[0,1],$ thus 
\begin{equation*}
\limsup_{\alpha \rightarrow 1}\sup_{|h|>1}|h|^{-\alpha }\Vert \Delta
_{h}f\Vert _{L^{1}}\leq \sup_{|h|>1}|h|^{-1}\Vert \Delta _{h}f\Vert
_{L^{1}}\lesssim\|f\|_{BV}.
\end{equation*}%
Collecting estimates, we see that 
\begin{equation*}
\limsup_{\alpha \rightarrow 1}\Vert f\Vert _{\dot{\Lambda}_{\alpha
}^{1,\infty }}\lesssim \left\Vert f\right\Vert _{BV}+\epsilon .
\end{equation*}%
Consequently, letting $\epsilon \rightarrow 0,$ we obtain 
\begin{equation*}
\limsup_{\alpha \rightarrow 1}\Vert f\Vert _{\dot{\Lambda}_{\alpha
}^{1,\infty }}\lesssim \Vert f\Vert _{BV}.
\end{equation*}

\smallskip \noindent\textit{Notation.} In the above and below, $A\approx B$
means $A\lesssim B\lesssim A$; while $A\lesssim B$ stands for $A\leq cB$ for
a constant $c>0$.

\section{$\infty$-Besov spaces and capacities}

\label{s2}

\subsection{Besov spaces}

\label{s21} The Besov spaces $\Lambda _{\alpha }^{p,q}(\mathbb{R}^{n})\equiv
\Lambda _{\alpha }^{p,q}$ that we shall consider in this paper will be
defined in terms of difference operators (cf. e.g. \cite{Si, KuJF}). Let $%
h\in \mathbb{R}^{n}$, the difference operator, $\Delta _{h}$, acting on
functions $f$ defined on $\mathbb{R}^{n}$, is given by, 
\begin{equation*}
\Delta _{h}f(x)=f(x+h)-f(x)\quad \forall \quad x\in \mathbb{R}^{n}.
\end{equation*}

\begin{defn}
\label{d21}The ${\Lambda }_{\alpha }^{p,q}$ spaces are defined according to
the values of the parameters $(\alpha ,p,q)$ as follows:

\begin{itemize}
\item[(i)] If $(\alpha ,p,q)\in (0,1)\times \lbrack 1,\infty]\times \lbrack
1,\infty ),$ then ${\Lambda }_{\alpha }^{p,q}$ is the class of all $L^{p}$%
-functions $f$ such that 
\begin{equation*}
\Vert f\Vert _{\dot{\Lambda}_{\alpha }^{p,q}}=\left( \int_{\mathbb{R}%
^{n}}\Vert \Delta _{h}f\Vert _{L^{p}}^{q}{|h|^{-(n+\alpha q)}}\,dh\right) ^{%
\frac{1}{q}}<\infty .
\end{equation*}

\item[(ii)] If $(\alpha ,p,q)\in (0,1)\times \lbrack 1,\infty]\times
\{\infty \}$ then ${\Lambda }_{\alpha }^{p,q}$ is the class of all $L^{p}$%
-functions $f$ such that 
\begin{equation*}
\Vert f\Vert _{\dot{\Lambda}_{\alpha }^{p,q}}=\sup_{h\in \mathbb{R}%
^{n}}\Vert \Delta _{h}f\Vert _{L^{p}}|h|^{-\alpha }<\infty .
\end{equation*}
\end{itemize}
\end{defn}

For perspective, two interesting and important remarks on Definition \ref%
{d21} are given below to show further connections of the $\infty-$Besov
spaces with some classical function spaces.

\begin{rem}
\label{r21a} Two comments on Definition \ref{d21}(i) are in order.

\begin{itemize}
\item[(i)] It is interesting to observe that there is a natural Leibniz rule
associated to $\Vert \cdot \Vert _{\dot{\Lambda}_{\alpha }^{p,q}}$, which is
connected with a family of $BMO$-based Besov spaces extending the case $%
p=\infty$ of Definition \ref{d21}(i) due to $L^\infty\subset BMO$; see also
Remark \ref{r21b}(ii) . Let 
\begin{equation*}
\Vert f\Vert _{\dot{\Lambda}^{BMO,q}_\alpha}= 
\begin{cases}
\left( \int_{\mathbb{R}^{n}}\Vert \Delta _{h}f\Vert _{BMO}^{q}{%
|h|^{-(n+\alpha q)}}\,dh\right) ^{\frac{1}{q}}\ \ \hbox{as}\ \ q\in \lbrack
1,\infty ); \\ 
\sup_{h\in \mathbb{R}^{n}}\Vert \Delta _{h}f\Vert _{BMO}|h|^{-\alpha }\ \ %
\hbox{as}\ \ q=\infty .%
\end{cases}%
\end{equation*}

Then (cf. \cite{KT} and more recently \cite[(1.22)]{Milman}), if $p\in
(1,\infty ]$ and $h\in \mathbb{R}^{n}$, we have 
\begin{align*}
\Vert \Delta _{h}(fg)\Vert _{L^{p}}& =\Vert f(x+h)g(x+h)-f(x)g(x)\Vert
_{L^{p}} \\
& =\Vert \Delta _{h}f(\cdot )g(\cdot +h)+f(\cdot )\Delta _{h}g(\cdot )\Vert
_{L^{p}} \\
& \leq \Vert \Delta _{h}f(\cdot )g(\cdot +h)\Vert _{L^{p}}+\Vert f\Delta
_{h}g\Vert _{L^{p}} \\
& \leq \left\Vert \Delta _{h}f\right\Vert _{L^{p}}\Vert g(\cdot +h)\Vert
_{BMO}+\left\Vert \Delta _{h}f\right\Vert _{BMO}\Vert g(\cdot +h)\Vert
_{L^{p}}+\left\Vert f\right\Vert _{L^{p}}\Vert \Delta _{h}g\Vert
_{BMO}+\Vert f\Vert _{BMO}\Vert \Delta _{h}g\Vert _{L^{p}} \\
& =\left\Vert \Delta _{h}f\right\Vert _{L^{p}}\Vert g\Vert _{BMO}+\left\Vert
\Delta _{h}f\right\Vert _{BMO}\Vert g\Vert _{L^{p}}+\left\Vert f\right\Vert
_{L^{p}}\Vert \Delta _{h}g\Vert _{BMO}+\Vert f\Vert _{BMO}\Vert \Delta
_{h}g\Vert _{L^{p}}
\end{align*}%
where we have used Minkowski's inequality and the translation-invariance of $%
\Vert \cdot \Vert _{L^{p}}$ and $\Vert \cdot \Vert _{BMO}$. Consequently, 
\begin{align*}
\Vert fg\Vert _{\dot{\Lambda}_{\alpha }^{p,q}}& \lesssim \Vert f\Vert _{\dot{%
\Lambda}_{\alpha }^{p,q}}\Vert g\Vert _{BMO}+\Vert g\Vert _{\dot{\Lambda}%
_{\alpha }^{p,q}}\Vert f\Vert _{BMO}+\Vert f\Vert _{L^{p}}\Vert g\Vert _{%
\dot{\Lambda}_{\alpha }^{BMO,q}}+\Vert f\Vert _{\dot{\Lambda}_{\alpha
}^{BMO,q}}\Vert g\Vert _{L^{p}} \\
& \lesssim \big(\Vert f\Vert _{\dot{\Lambda}_{\alpha }^{p,q}}+\Vert f\Vert
_{L^{p}}\big)\Big(\Vert g\Vert _{BMO}+\Vert g\Vert _{\dot{\Lambda}_{\alpha
}^{BMO,q}}\Big)+\big(\Vert g\Vert _{\dot{\Lambda}_{\alpha }^{p,q}}+\Vert
g\Vert _{L^{p}}\big)\Big(\Vert f\Vert _{BMO}+\Vert f\Vert _{\dot{\Lambda}%
_{\alpha }^{BMO,q}}\Big).
\end{align*}

\item[(ii)] Let 
\begin{equation*}
I_{\alpha }g=\mathcal{F}^{-1}(|\zeta |^{-\alpha }\hat{g}(\zeta ))
\end{equation*}%
be the $\alpha $-Riesz potential of $g$ defined via the Fourier transform $%
\hat{g}=\mathcal{F}g$ and the inverse Fourier transform $\mathcal{F}^{-1}$.
Moreover, let 
\begin{equation*}
\Vert f\Vert _{I_{\alpha }(BMO)}=\Vert g\Vert _{BMO}\quad \hbox{if}\quad
f=I_{\alpha }g.
\end{equation*}%
Let $1<q_{1}<2<q_{2}<\infty .$ Then, the following implications hold for $%
\dot{\Lambda}_{\alpha }^{\infty ,q}$(cf. \cite[Theorem 3.4]{St}): 
\begin{align*}
\Vert f\Vert _{\dot{\Lambda}_{\alpha }^{\infty ,1}}<\infty & \Rightarrow
\Vert f\Vert _{\dot{\Lambda}_{\alpha }^{\infty ,q_{1}}}<\infty  \\
& \Rightarrow \Vert f\Vert _{\dot{\Lambda}_{\alpha }^{\infty ,2}}<\infty  \\
& \Rightarrow \Vert f\Vert _{I_{\alpha }(BMO)}<\infty \ \hbox{or}\ \Vert
f\Vert _{\dot{\Lambda}_{\alpha }^{\infty ,q_{2}}}<\infty  \\
& \Rightarrow \Vert f\Vert _{\dot{\Lambda}_{\alpha }^{\infty ,\infty
}}<\infty .
\end{align*}
\end{itemize}
\end{rem}

\begin{rem}
\label{r21b} Two comments on Definition \ref{d21}(ii) are in order:

\begin{itemize}
\item[(i)] From \cite[Theorem 2.16]{Zi} and \cite[p. 245]{CDDD} it follows
that $f$ is in the first-order Sobolev $p$-space $W^{1,p}$ if and only if 
\begin{equation*}
f\in L^p\quad\&\quad \sup_{h\in \mathbb{R}^{n}}|h|^{-1}\Vert \Delta
_{h}f\Vert _{L^{p}}<\infty ,
\end{equation*}
and $f$ is of bounded variation on $\mathbb{R}^{n}$, i.e. $f\in BV$, if and
only if 
\begin{equation*}
f\in L^{1}\quad\&\quad \sup_{h\in \mathbb{R}^{n}}|h|^{-1}\Vert \Delta
_{h}f\Vert _{L^{1}}<\infty.
\end{equation*}
So, for $(\alpha,p)\in(0,1)\times[1,\infty)$, the $\infty$-Besov space $%
\Lambda_\alpha^{p,\infty}$ can be treated as the fractional extensions of $%
W^{1,p}$ and $BV$.

\item[(ii)] Although the inclusions 
\begin{equation*}
\dot{\Lambda}_{\alpha }^{p,q}\subset \dot{\Lambda}_{\alpha }^{p,\infty
},(\alpha ,p,q)\in (0,1)\times \lbrack 1,\infty )\times \lbrack 1,\infty ),
\end{equation*}%
are well-known, it is quite surprising that for $(\alpha ,p)\in (0,1)\times
\lbrack 1,\infty )$ the homogeneous $\infty $-Besov space $\dot{\Lambda}%
_{\alpha }^{p,\infty }$ embeds in the family of Campanato spaces. Towards a
proof of this fact this let $\delta >0$ and $(\alpha ,p)\in (0,1)\times
\lbrack 1,\infty )$; then if $f\in \dot{\Lambda}_{\alpha }^{p,\infty }$ we
have 
\begin{equation*}
\int_{|h|<\delta }\Vert \Delta _{h}f\Vert _{L^{p}}^{p}\,dh\lesssim \Vert
f\Vert _{\dot{\Lambda}_{\alpha }^{p,\infty }}^{p}\delta ^{\alpha p+n}.
\end{equation*}%
Consequently, 
\begin{equation*}
\int_{|x-x_{0}|<\frac{\delta }{2}}\int_{|y-x_{0}|<\frac{\delta }{2}%
}|f(x)-f(y)|^{p}\,dydx\lesssim \Vert f\Vert _{\dot{\Lambda}_{\alpha
}^{p,\infty }}^{p}\delta ^{\alpha p+n}.
\end{equation*}%
Let $B(x_{0},r)$ be the Euclidean ball with center $x_{0}$, radius $r,$ and
Lebesgue measure $|B(x_{0},r)|$. Let $\fint_{E}$ stand for the integral mean
of $f$ over $E\subset \mathbb{R}^{n},$ with respect to the Lebesgue measure $%
dx$ or $dy$. Using Jensen's inequality we readily obtain 
\begin{equation*}
\int_{B(x_{0},\frac{\delta }{2})}\int_{B(x_{0},\frac{\delta }{2}%
)}|f(x)-f(y)|^{p}\,dydx\geq \Big|B(x_{0},\frac{\delta }{2})\Big|%
\int_{B(x_{0},\frac{\delta }{2})}\Big|f(x)-\fint_{B(x_{0},\frac{\delta }{2}%
)}f\Big|^{p}\,dx,
\end{equation*}%
thus we see that 
\begin{equation*}
\Vert f\Vert _{p,\alpha }=\sup_{(x_{0},r)\in \mathbb{R}^{n}\times (0,\infty
)}r^{-\alpha }\left\Vert f-\fint_{B(x_{0},r)}f\right\Vert
_{L^{p}(B(x_{0},r))}\lesssim \Vert f\Vert _{\dot{\Lambda}_{\alpha
}^{p,\infty }}.
\end{equation*}%
We note the following consequences of the previous discussion:

\begin{itemize}
\item If $\alpha p<n$, then $\Vert f\Vert _{\dot{\Lambda}_{\alpha
}^{p,\infty }}$ $<\infty ,$ implies that $f$ belongs to the $(-\alpha +\frac{%
n}{p})$-Campanato class $\mathcal{L}^{p,\alpha p}$, i.e., $\Vert f\Vert _{%
\mathcal{L}^{p,\alpha p}}=\Vert f\Vert _{p,\alpha }<\infty $; cf. \cite[p.67]%
{Gia}.

\item If $\alpha p=n$, then $\Vert f\Vert _{\dot{\Lambda}_{\alpha
}^{p,\infty }}$ $<\infty $ implies that $f$ belongs to the class $BMO$ of
functions with bounded mean oscillation, i.e., $\Vert f\Vert _{BMO}=\Vert
f\Vert _{p,\frac{n}{p}}<\infty $; cf. \cite{JN}.

\item If $\alpha p>n$, then $\Vert f\Vert _{\dot{\Lambda}_{\alpha
}^{p,\infty }}<\infty $ implies that $f$ is $(\alpha -\frac{n}{p})$-Hö%
lder-continuous, i.e., $\Vert f\Vert _{\dot{\Lambda}_{\alpha -\frac{n}{p}%
}^{\infty ,\infty }}\approx \|f\|_{p,\alpha}<\infty $; cf. \cite[p.70]{Gia}.
\end{itemize}
\end{itemize}
\end{rem}

\subsection{$\infty$-Besov capacities}

\label{s22}

Motivated by \cite[p.27]{HKM} and Remark \ref{r21b}(ii), we introduce the
following definition.

\begin{defn}
\label{d22} Let $C(\mathbb{R}^n)$ be the class of all continuous functions
on $\mathbb{R}^n$. Then the (homogeneous) $\infty$-Besov capacity of a set $%
E\subset\mathbb{R}^n$ is defined by 
\begin{equation*}
C_{\alpha,p,q}(E)=%
\begin{cases}
\inf\big\{\|f\|^p_{\dot{\Lambda}^{p,\infty}_\alpha}:\ f\in\mathsf{A}%
_{\alpha,p,\infty}(E)\big\}\quad\hbox{as}\quad (\alpha,p,q)\in (0,1)\times[%
1,\infty]\times\{\infty\}; \\ 
\inf\big\{\|f\|^q_{\dot{\Lambda}^{\infty,q}_\alpha}:\ f\in\mathsf{A}%
_{\alpha,\infty,q}(E)\big\}\quad\hbox{as}\quad (\alpha,p,q)\in
(0,1)\times\{\infty\}\times[1,\infty],%
\end{cases}%
\end{equation*}
where 
\begin{equation*}
\mathsf{A}_{\alpha,p,q}(E)=\Big\{f\in \Lambda_\alpha^{p,q}\cap C(\mathbb{R}%
^n):\ f\ge 1_N\ \ \hbox{on\ a\ neighbourhood\ N\ of}\ E\Big\}
\end{equation*}
and $1_N$ is the indicator of $N$.
\end{defn}

We shall now explore the nature of the $\infty $-Besov capacities.

\begin{prop}
\label{p21} Suppose $(\alpha ,q)\in (0,1)\times \lbrack 1,\infty ].$ Then $%
C_{\alpha ,\infty ,q}(E)=0\ \forall\ E\subset\mathbb{R}^n$.
\end{prop}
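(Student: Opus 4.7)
The proof should be essentially a one-line observation: the constant function $f\equiv 1$ is admissible for every $E$ and has homogeneous $\dot{\Lambda}^{\infty,q}_\alpha$-seminorm equal to zero. I would simply verify each clause of the definition of $\mathsf{A}_{\alpha,\infty,q}(E)$ against this choice.

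More concretely, I would proceed as follows. First, note that $f\equiv 1\in L^\infty(\mathbb{R}^n)$, which is the ambient space required in Definition \ref{d21}(i)--(ii) for the parameter slot $p=\infty$. Second, observe that for every $h\in\mathbb{R}^n$ one has $\Delta_h f \equiv 0$, so $\|\Delta_h f\|_{L^\infty}=0$. Plugging this into the homogeneous seminorm,
\begin{equation*}
\|f\|_{\dot{\Lambda}^{\infty,q}_\alpha}=
\begin{cases}
\left(\int_{\mathbb{R}^n} 0\cdot |h|^{-(n+\alpha q)}\,dh\right)^{1/q}=0 & \text{if } q\in[1,\infty),\\
\sup_{h\in\mathbb{R}^n} 0\cdot |h|^{-\alpha}=0 & \text{if } q=\infty,
\end{cases}
\end{equation*}
so in particular $\|f\|_{\dot{\Lambda}^{\infty,q}_\alpha}<\infty$ and hence $f\in\Lambda^{\infty,q}_\alpha$. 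Third, $f\equiv 1$ is trivially continuous on $\mathbb{R}^n$. Fourth, for \emph{any} neighborhood $N$ of $E$ one has $f(x)=1\geq 1_N(x)$ pointwise on $\mathbb{R}^n$, so the admissibility condition holds.

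Therefore $f\equiv 1\in\mathsf{A}_{\alpha,\infty,q}(E)$, and taking the infimum in the defining formula for $C_{\alpha,\infty,q}(E)$ yields $C_{\alpha,\infty,q}(E)\le \|1\|^q_{\dot{\Lambda}^{\infty,q}_\alpha}=0$. Since capacities are nonnegative by construction, $C_{\alpha,\infty,q}(E)=0$, and this holds for every $E\subset\mathbb{R}^n$.

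There is no real obstacle: the proposition is a structural observation explaining why one must exclude $p=\infty$ from a useful theory of Besov capacities, and it motivates the restriction $p<\infty$ (with $q=\infty$) in the sequel. The only thing to be careful about is the interpretation of the admissibility condition ``$f\ge 1_N$ on a neighbourhood $N$ of $E$,'' but under either natural reading the constant function $1$ qualifies.
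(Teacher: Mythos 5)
Your proof is correct and is exactly the paper's argument: the paper's own proof is the one-line observation that constant functions lie in $\Lambda_{\alpha}^{\infty,q}\cap C(\mathbb{R}^n)$ with vanishing homogeneous seminorm, and you have simply spelled out the verification of admissibility. No differences of substance.
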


\begin{proof}
The result follows immediately from the fact that constant functions belong
to $\Lambda _{\alpha }^{\infty ,q}\cap C(\mathbb{R}^n)$, when $q\in \lbrack
1,\infty ].$
\end{proof}

Consequently, the interesting situation of $C_{\alpha,p,q}(\cdot)$ is the
capacity $C_{\alpha,p,\infty}$ under $(\alpha,p)\in (0,1)\times[1,\infty)$.
Referring to \cite{KM, C}, we have the following basic properties.

\begin{prop}
\label{p22} Let $(\alpha,p)\in (0,1)\times[1,\infty)$. Then:

\begin{enumerate}
\item $C_{\alpha,p,\infty}(\emptyset)=0$.

\item $C_{\alpha ,p,\infty }(E_{1})\leq C_{\alpha ,p,\infty }(E_{2})$
whenever $E_{1}\subseteq E_{2}$.

\item $C_{\alpha ,p,\infty }(E_1\cup E_2)\leq C_{\alpha ,p,\infty
}(E_1)+C_{\alpha ,p,\infty }(E_1)$ provided $E_1,E_2\subset\mathbb{R}^n$.

\item $C_{\alpha,p,\infty}(E)=\inf\{C_{\alpha,p,\infty}(O):\ \hbox{open}\
O\supseteq E\}$.

\item $C_{\alpha ,p,\infty }(\cap _{j=1}^{\infty }K_{j})=\lim_{j\rightarrow
\infty }C_{\alpha ,p,\infty }(K_{j}),$ where the $K_{j}^{\prime }s$ are
compact subsets of $\mathbb{R}^{n},$ with $K_{j}\supseteq K_{j+1}\ \forall \
j=1,2,3,...$.
\end{enumerate}
\end{prop}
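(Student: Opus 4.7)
My plan is to dispatch (1), (2), and (4) as essentially formal consequences of the definition of $\mathsf{A}_{\alpha,p,\infty}(E)$, then prove (3) by a lattice construction, and finally derive (5) from (4) via a compactness argument. For (1), I would take $f\equiv 0 \in \mathsf{A}_{\alpha,p,\infty}(\emptyset)$ (the empty set is trivially a neighborhood of itself), giving $C_{\alpha,p,\infty}(\emptyset)\le \|0\|^p_{\dot{\Lambda}^{p,\infty}_\alpha}=0$. For (2), if $E_1\subseteq E_2$ then any neighborhood of $E_2$ is a neighborhood of $E_1$, so $\mathsf{A}_{\alpha,p,\infty}(E_2)\subseteq \mathsf{A}_{\alpha,p,\infty}(E_1)$ and the infimum can only grow in passing from $E_1$ to $E_2$. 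For (4), monotonicity already gives $C_{\alpha,p,\infty}(E)\le \inf C_{\alpha,p,\infty}(O)$ over open $O\supseteq E$; for the reverse, any $f\in \mathsf{A}_{\alpha,p,\infty}(E)$ with associated open neighborhood $N\supseteq E$ is itself admissible for $N$, so $\inf_{O\supseteq E}C_{\alpha,p,\infty}(O)\le C_{\alpha,p,\infty}(N)\le \|f\|^p_{\dot{\Lambda}^{p,\infty}_\alpha}$, and taking the infimum over $f$ finishes it.

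The main technical content is the subadditivity (3). Fix $\epsilon>0$ and choose near-optimal $f_i\in \mathsf{A}_{\alpha,p,\infty}(E_i)$ with open neighborhoods $N_i\supseteq E_i$ and $\|f_i\|^p_{\dot{\Lambda}^{p,\infty}_\alpha}<C_{\alpha,p,\infty}(E_i)+\epsilon$. My candidate for $E_1\cup E_2$ is $F=\max(f_1,f_2)$, which is continuous, lies in $L^p$ via $|F|\le|f_1|+|f_2|$, and satisfies $F\ge 1$ on the open set $N_1\cup N_2\supseteq E_1\cup E_2$. Everything hinges on the pointwise inequality
\[
|\Delta_h \max(f_1,f_2)(x)|^p \le |\Delta_h f_1(x)|^p+|\Delta_h f_2(x)|^p\qquad (p\ge 1),
\]
which I expect to be the main obstacle. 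I would establish it by splitting into four cases according to which of $f_1(x),f_2(x)$ and which of $f_1(x+h),f_2(x+h)$ achieves the maximum: in the two ``straight'' cases $\Delta_h F(x)$ equals $\Delta_h f_1(x)$ or $\Delta_h f_2(x)$, while in each ``crossed'' case the inequalities forced by the choice of the maximum squeeze $\Delta_h F(x)$ between $\Delta_h f_1(x)$ and $\Delta_h f_2(x)$, so that $|\Delta_h F(x)|\le \max(|\Delta_h f_1(x)|,|\Delta_h f_2(x)|)$ and the claimed $p$-th power bound follows. Integrating in $x$, dividing by $|h|^{\alpha p}$, and taking the supremum over $h$ then produces $\|F\|^p_{\dot{\Lambda}^{p,\infty}_\alpha}\le \|f_1\|^p_{\dot{\Lambda}^{p,\infty}_\alpha}+\|f_2\|^p_{\dot{\Lambda}^{p,\infty}_\alpha}$, and letting $\epsilon\to 0$ yields (3).

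Finally, for (5) I would write $K=\cap_j K_j$. Property (2) yields $C_{\alpha,p,\infty}(K)\le \lim_j C_{\alpha,p,\infty}(K_j)$, the limit existing because the sequence is non-increasing. For the reverse, given $\epsilon>0$, (4) produces an open $O\supseteq K$ with $C_{\alpha,p,\infty}(O)<C_{\alpha,p,\infty}(K)+\epsilon$; the compact sets $K_j\setminus O$ form a decreasing family in $K_1$ with empty intersection, so the finite-intersection property supplies some $j_0$ with $K_{j_0}\subseteq O$, and monotonicity gives $C_{\alpha,p,\infty}(K_j)\le C_{\alpha,p,\infty}(O)<C_{\alpha,p,\infty}(K)+\epsilon$ for all $j\ge j_0$. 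Letting $\epsilon\to 0$ closes the argument.
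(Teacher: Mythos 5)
Your proposal is correct and follows essentially the same route as the paper: max of near-optimal admissible functions for (3), passing from an admissible $f$ to its open neighbourhood for (4), and compactness plus (2) and (4) for (5). The only difference is that you actually justify the pointwise inequality $\Vert \Delta_h\max(f_1,f_2)\Vert_{L^p}^p\leq\Vert \Delta_hf_1\Vert_{L^p}^p+\Vert \Delta_hf_2\Vert_{L^p}^p$ by the case analysis on which function attains the maximum, whereas the paper asserts it without proof; your argument for it is valid.
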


\begin{proof}
(1) and (2) follow immediately from the definition of $C_{\alpha ,p,\infty
}(\cdot )$.

(3) For any given $\epsilon >0$, $j=1,2,$ pick $f_{j}\in \mathsf{A}_{\alpha
,p,\infty }(E_{j})$ such that 
\begin{equation*}
\Vert f_{j}\Vert _{\dot{\Lambda}_{\alpha }^{p,\infty }}^{p}<C_{\alpha
,p,\infty }(E_{j})+\epsilon .
\end{equation*}%
Note that 
\begin{equation*}
\begin{cases}
f=\max \{f_{1},f_{2}\}\in \mathsf{A}_{\alpha ,p,\infty }(E_{1}\cup E_{2});
\\ 
\Vert \Delta _{h}f\Vert _{L^{p}}^{p}\leq \Vert \Delta _{h}f_{1}\Vert
_{L^{p}}^{p}+\Vert \Delta _{h}f_{2}\Vert _{L^{p}}^{p}.%
\end{cases}%
\end{equation*}%
Consequently, 
\begin{equation*}
C_{\alpha ,p,\infty }(E_{1}\cup E_{2})\leq \Vert f\Vert _{\dot{\Lambda}%
_{\alpha }^{p,\infty }}^{p}\leq C_{\alpha ,p,\infty }(E_{1})+C_{\alpha
,p,\infty }(E_{2})+2\epsilon .
\end{equation*}%
Letting $\epsilon \rightarrow 0$ the desired result follows.

(4) In view of (2), the verification of (4) will be complete once we prove 
\begin{equation*}
C_{\alpha ,p,\infty }(E)\geq \inf \{C_{\alpha ,p,\infty }(O):\ \hbox{open}\
O\supseteq E\}.
\end{equation*}%
Now, for any $\epsilon >0$ there exists a function $f_{0}\in \mathsf{A}%
_{\alpha ,p,\infty }(E),$ a neighborhood $O$ of $E$ such that $f_0\geq 1$ in 
$O,$ and, moreover, 
\begin{equation*}
\Vert f_{0}\Vert _{\dot{\Lambda}_{\alpha }^{p,\infty }}^{p}<C_{\alpha
,p,\infty }(E)+\epsilon .
\end{equation*}%
But since we also have 
\begin{equation*}
C_{\alpha ,p,\infty }(O)\leq \Vert f_{0}\Vert _{\dot{\Lambda}_{\alpha
}^{p,\infty }}^{p},
\end{equation*}%
the required inequality follows from combining the last two inequalities and
letting $\epsilon $ go to $0$.

(5) Suppose that $K_{1}\supseteq K_{2}\supseteq K_{3}\cdots \supseteq K=\cap
_{j=1}^{\infty }K_{j}$ is a sequence of compact subsets of $\mathbb{R}^{n}$.
Then $\{C_{\alpha ,p,\infty }(K_{j})\}$ is a decreasing numerical sequence
and therefore has a limit as $j\rightarrow \infty $. Let $O$ be any open set
such that $O\supset K$. Since $K$ is compact, there exists an index $j$ such
that $K_{j}\subset O,$ whence, 
\begin{equation*}
\lim_{j\rightarrow \infty }C_{\alpha ,p,\infty }(K_{j})\leq C_{\alpha
,p,\infty }(O).
\end{equation*}%
The last estimate, combined with (2)\&(4), implies 
\begin{equation*}
C_{\alpha ,p,\infty }(K)\leq \lim_{j\rightarrow \infty }C_{\alpha ,p,\infty
}(K_{j})\leq C_{\alpha ,p,\infty }(K).
\end{equation*}
\end{proof}

\begin{rem}
\label{r21} We have been unable to prove that $C_{\alpha,p,\infty}$ is
countably subadditive, but refer the interested reader to \cite{KM0,KM, HK}
for a discussion on the so-called Sobolev capacity and BV capacity on metric
spaces.
\end{rem}

\begin{prop}
\label{p23} Let $(\alpha ,\beta ,p)\in (0,1)\times (0,\infty )\times \lbrack
1,\infty )$. For $E\subset \mathbb{R}^{n}$ let

\begin{itemize}
\item[$\circ$] $E-h=\{x-h: x\in E\}$ be its $\mathbb{R}^n\ni h$-left
translation;

\item[$\circ$] 
\begin{equation*}
P_{\alpha,p,\infty}(E)=\|1_E\|_{\dot{\Lambda}^{p,\infty}_\alpha}=\sup_{h\in%
\mathbb{R}^n}|h|^{-\alpha}\Big(2\big(|E|-|E\cap(E-h)|\big)\Big)^\frac1p
\end{equation*}
be its $(\alpha,p,\infty)$-perimeter;

\item[$\circ$] 
\begin{equation*}
H^{\beta }(E)=\inf \left\{ \sum_{j=1}^{\infty }\left( \frac{\pi ^{\frac{%
\beta }{2}}}{\Gamma (1+\frac{\beta }{2})}\right) r_{j}^{\beta }:\ \
E\subseteq \cup _{j=1}^{\infty }B(x_{j},r_{j})\right\}
\end{equation*}%
be its $\beta $-dimensional Hausdorff capacity with $\Gamma (\gamma
)=\int_{0}^{\infty }e^{-t}t^{\gamma -1}$ and $\gamma \in (0,\infty )$.
\end{itemize}

\begin{enumerate}
\item Suppose $f\in{\Lambda}_{\alpha }^{p,\infty}\cap C(\mathbb{R}^n)$. Then,

\begin{equation*}
\left( t^{p}C_{\alpha ,p,\infty }\big(\{x\in \mathbb{R}^{n}:|f(x)|>t\}\big)%
\right) ^{\frac{1}{p}}\leq \big\||f|\big\|_{\dot{\Lambda}_{\alpha
}^{p,\infty }}\leq \int_{0}^{\infty }P_{\alpha ,p,\infty }\big(\{x\in 
\mathbb{R}^{n}:|f(x)|>t\}\big)\,dt.
\end{equation*}

\item $\big(C_{\alpha ,p,\infty }(E)\big)^{\frac{1}{p}}\leq P_{\alpha
,p,\infty }(E)$.

\item If $0<H^n(E), H^{n-1}(E)<\infty$, then 
\begin{equation*}
P_{\alpha ,p,\infty }(E)<\infty \Leftrightarrow \alpha \leq p^{-1},
\end{equation*}
in other words, 
\begin{equation*}
P_{\alpha ,p,\infty }(E)=\infty \Leftrightarrow \alpha >p^{-1}.
\end{equation*}

\item Let $\mathbb{B}^{n}$ be the unit ball in $\mathbb{R}^{n}.$ For any
Euclidean ball $B(x_{0},r_{0})$ centered at $x_{0}\in \mathbb{R}^{n}$ and
with radius $r_{0}>0,$ 
\begin{equation*}
C_{\alpha ,p,\infty }\big(B(x_{0},r_{0})\big)=r_{0}^{n-\alpha p}C_{\alpha
,p,\infty }\big(\mathbb{B}^{n}\big).
\end{equation*}

\item 
\begin{equation*}
C_{\alpha ,p,\infty }(E)\leq 
\begin{cases}
\left( \frac{\pi ^{\frac{n-\alpha p}{2}}}{\Gamma (1+\frac{n-\alpha p}{2})}%
\right) ^{-1}C_{\alpha ,p,\infty }(\mathbb{B}^{n})H^{n-\alpha p}(E)\quad %
\hbox{as}\quad p\in \lbrack 1,\frac{n}{\alpha }); \\ 
0\quad \hbox{as}\quad p\in \lbrack \frac{n}{\alpha },\infty ).%
\end{cases}%
\end{equation*}
\end{enumerate}
\end{prop}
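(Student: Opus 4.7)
The plan is to treat the five statements in turn; they rely on largely independent ideas, with (5) carrying the main technical burden.

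\emph{Parts (1) and (2).} For the left inequality in (1), continuity of $f$ makes $E_t:=\{|f|>t\}$ open, and $|f|/t\in\Lambda_\alpha^{p,\infty}\cap C(\mathbb{R}^n)$ satisfies $|f|/t\ge 1$ on the open neighbourhood $E_t$ of itself; hence $|f|/t\in\mathsf{A}_{\alpha,p,\infty}(E_t)$, which gives $C_{\alpha,p,\infty}(E_t)\le t^{-p}\||f|\|_{\dot\Lambda_\alpha^{p,\infty}}^p$. For the right inequality, the layer-cake identity $|f(x)|=\int_0^\infty 1_{E_t}(x)\,dt$ yields $\Delta_h|f|=\int_0^\infty\Delta_h 1_{E_t}\,dt$, so Minkowski's integral inequality produces $\|\Delta_h|f|\|_{L^p}\le\int_0^\infty\|\Delta_h 1_{E_t}\|_{L^p}\,dt\le|h|^\alpha\int_0^\infty P_{\alpha,p,\infty}(E_t)\,dt$; dividing by $|h|^\alpha$ and taking $\sup_h$ concludes. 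For (2), since $1_E$ is not continuous I mollify: with a standard mollifier $\phi_\delta$ supported in $B(0,\delta)$ and $N_{2\delta}:=\{d(\cdot,E)<2\delta\}$, set $g_\delta:=\phi_\delta*1_{N_{2\delta}}$. Then $g_\delta\in C(\mathbb{R}^n)$ and $g_\delta\equiv 1$ on the open neighbourhood $N_\delta$ of $E$ (because $x\in N_\delta$ forces $B(x,\delta)\subset N_{2\delta}$), so $g_\delta\in\mathsf{A}_{\alpha,p,\infty}(E)$; Young's inequality yields $\|g_\delta\|_{\dot\Lambda_\alpha^{p,\infty}}\le P_{\alpha,p,\infty}(N_{2\delta})$, and a limit $\delta\downarrow 0$ delivers (2).

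\emph{Parts (3) and (4).} For (3), split the supremum defining $P_{\alpha,p,\infty}(E)$ at $|h|=1$. Since $H^n(E)<\infty$ forces $|E|<\infty$, the large-$|h|$ piece is bounded by $(2|E|)^{1/p}\sup_{|h|\ge 1}|h|^{-\alpha}<\infty$. For small $|h|$, the geometric estimate $|E\setminus(E-h)|\lesssim|h|\,H^{n-1}(E)$ produces a term of order $|h|^{1/p-\alpha}$, bounded iff $\alpha\le 1/p$. The converse exploits positivity of $H^n(E)$: a Lebesgue-density argument yields $|E\setminus(E-h)|\gtrsim|h|$ along some direction $h\to 0$, forcing $|h|^{1/p-\alpha}\to\infty$ if $\alpha>1/p$. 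For (4), translation invariance of the Besov norm reduces to $x_0=0$; the change of variable $u=r_0 x$ gives $\|f(r_0\cdot)\|_{\dot\Lambda_\alpha^{p,\infty}}^p=r_0^{\alpha p-n}\|f\|_{\dot\Lambda_\alpha^{p,\infty}}^p$, and $f\mapsto f(r_0\cdot)$ bijects $\mathsf{A}_{\alpha,p,\infty}(B(0,r_0))$ onto $\mathsf{A}_{\alpha,p,\infty}(\mathbb{B}^n)$, so taking infima produces the scaling identity.

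\emph{Part (5).} For $p<n/\alpha$, fix $\eta>0$ and a countable cover $E\subset\bigcup_j B(x_j,r_j)$; pick near-extremal $f_j\in\mathsf{A}_{\alpha,p,\infty}(B(x_j,r_j))$ with $\|f_j\|_{\dot\Lambda_\alpha^{p,\infty}}^p<C_{\alpha,p,\infty}(B(x_j,r_j))+\eta\,2^{-j}$, arranged to have essentially local support so that $f:=\sup_j f_j$ is continuous and hence lies in $\mathsf{A}_{\alpha,p,\infty}(E)$. The pointwise inequality $|\Delta_h\sup_j f_j|^p\le\sum_j|\Delta_h f_j|^p$ combined with $\sup_h\sum_j\le\sum_j\sup_h$ gives $\|f\|_{\dot\Lambda_\alpha^{p,\infty}}^p\le\sum_j\|f_j\|_{\dot\Lambda_\alpha^{p,\infty}}^p$; inserting (4) converts this into $C_{\alpha,p,\infty}(E)\le C_{\alpha,p,\infty}(\mathbb{B}^n)\sum_j r_j^{n-\alpha p}+\eta$, and the infimum over coverings identifies $\sum_j r_j^{n-\alpha p}$ (up to the factor $c_{n-\alpha p}$) with $H^{n-\alpha p}(E)$. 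For $p\ge n/\alpha$ one encloses the bounded part of $E$ in a single ball $B(x_0,R)$ and uses (4) to get $C_{\alpha,p,\infty}(E)\le R^{n-\alpha p}C_{\alpha,p,\infty}(\mathbb{B}^n)\to 0$ as $R\to\infty$ when $p>n/\alpha$, while the critical exponent $p=n/\alpha$ requires a logarithmic cutoff to establish $C_{\alpha,p,\infty}(\mathbb{B}^n)=0$. The principal obstacle throughout is preserving continuity of candidate functions through limits and $\sup$-constructions --- this is precisely what blocks a direct proof of countable subadditivity flagged in Remark \ref{r21} --- and is here sidestepped by mollification in (2) and by insisting on local supports for the $f_j$ in (5).
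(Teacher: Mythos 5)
Your treatment of (1) and (4) matches the paper's: the weak-type bound via $|f|/t\in\mathsf{A}_{\alpha,p,\infty}(\{|f|>t\})$, the layer-cake identity plus Minkowski for the co-area inequality, and the dilation $f\mapsto f(r_0\cdot)$ for the scaling law. For the necessity half of (3) you replace the paper's subdivision argument (which shows $\Vert 1_E-1_{E-h}\Vert_{L^1}\le (P_{\alpha,p,\infty}(E))^p k^{1-p\alpha}|h|^{p\alpha}\to 0$ when $\alpha>p^{-1}$, contradicting $\lim_{|h|\to\infty}\Vert 1_E-1_{E-h}\Vert_{L^1}=2|E|>0$) by the claim $|E\setminus(E-h)|\gtrsim|h|$ along some direction. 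That claim is true (if $\Vert 1_E-1_{E-h}\Vert_{L^1}=o(|h|)$ in every direction, subadditivity in $h$ forces the distributional gradient of $1_E$ to vanish, so $|E|\in\{0,\infty\}$), but a ``Lebesgue-density argument'' does not deliver the quantitative lower bound and you give none; as written this half is an assertion, not a proof.

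The genuine gaps are in (2) and (5). In (2), Young's inequality does give $C_{\alpha,p,\infty}(E)\le \big(P_{\alpha,p,\infty}(N_{2\delta})\big)^p$, but the concluding step ``a limit $\delta\downarrow 0$ delivers (2)'' requires $\limsup_{\delta\to 0}P_{\alpha,p,\infty}(N_{2\delta})\le P_{\alpha,p,\infty}(E)$, which is unjustified and false in general: the perimeter of a neighbourhood is not controlled by that of the set (if $|E|=0$ then $P_{\alpha,p,\infty}(E)=0$ while $P_{\alpha,p,\infty}(N_{2\delta})>0$). The paper instead tests with admissible $f$ whose seminorms approximate $\Vert 1_E\Vert_{\dot{\Lambda}_\alpha^{p,\infty}}$ and applies the weak-type estimate of (1) with $t\to 1$. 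In (5), your sup-of-near-minimizers construction is precisely an attempt to establish countable subadditivity, which the authors explicitly flag as open in Remark \ref{r21}; each of its load-bearing steps is unproved: that near-minimizers for $C_{\alpha,p,\infty}(B(x_j,r_j))$ can be ``arranged to have essentially local support'' without increasing the seminorm (truncation is not harmless for $\dot{\Lambda}_\alpha^{p,\infty}$), that $\sup_j f_j$ is continuous (a countable supremum of continuous functions is only lower semicontinuous), and that $\sup_j f_j\in L^p$ (the $L^p$ norms of the $f_j$ are not controlled by the homogeneous seminorms you selected them by). The paper simply chains Proposition \ref{p22}(2)--(3) over the cover. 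Finally, for $p=n/\alpha$ you defer to an unspecified ``logarithmic cutoff,'' whereas the paper combines the scaling identity of (4) with the decreasing-compacts property of Proposition \ref{p22}(5); and for unbounded $E$ with $p>n/\alpha$ the single enclosing ball does not exist, so some additional reduction is needed there as well.
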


\begin{proof}
(1) Without loss of generality, we may assume that $f\in \Lambda _{\alpha
}^{p,\infty }\cap C(\mathbb{R}^n)$ is nonnegative. Note that if 
\begin{equation*}
\{f>t\}=\{x\in\mathbb{R}^n: f(x)>t\}
\end{equation*}
(the upper $0<t$-level set of $f$) then $f/t>1$ in $\{f>t\}$. Therefore, 
\begin{equation*}
C_{\alpha ,p,\infty }(\{f>t\})\leq \Vert f/t\Vert _{\dot{\Lambda}_{\alpha
}^{p,\infty }}^{p},
\end{equation*}%
and the desired weak-type estimate follows: 
\begin{equation*}
t^{p}C_{\alpha ,p,\infty }(\{f>t\})\leq \Vert f\Vert _{\dot{\Lambda}_{\alpha
}^{p,\infty }}^{p}\quad \forall \quad t\in (0,\infty ).
\end{equation*}

To verify the remaining inequality in (1) (viewed as a co-area inequality),
we use (cf. \cite{ADM}) 
\begin{equation*}
|\Delta _{h}f(x)|=\int_{0}^{\infty }\big|1_{\{f>t\}}(x+h)-1_{\{f>t\}}(x)\big|%
\,dt,
\end{equation*}%
and Minkowski's inequality, to derive 
\begin{equation*}
|h|^{-\alpha }\Vert \Delta _{h}f\Vert _{L^{p}}\leq \int_{0}^{\infty
}|h|^{-\alpha }\Vert 1_{\{f>t\}}(\cdot +h)-1_{\{f>t\}}(\cdot )\Vert
_{L^{p}}\,dt\leq \int_{0}^{\infty }\Vert 1_{\{f>t\}}\Vert _{\dot{\Lambda}%
_{\alpha }^{p,\infty }}\,dt,
\end{equation*}%
as desired.

(2) For any $f\in\mathsf{A}_{\alpha,p,\infty}(E)$ with $\|f\|_{\dot{\Lambda}%
^{p,\infty}_\alpha}\to\|1_E\|_{\dot{\Lambda}^{p,\infty}_\alpha}$ we use the
monotonicity of $C_{\alpha ,p,\infty}$ to get 
\begin{equation*}
t\big(C_{\alpha ,p,\infty }(E)\big)^{\frac{1}{p}}\leq \left(t^{p}C_{\alpha
,p,\infty }\big(\{x\in \mathbb{R}^{n}:\ f(x)>t\}\big)\right) ^{\frac{1}{p}%
}\leq \Vert f\Vert _{\dot{\Lambda}_{\alpha }^{p,\infty }}\ \ \forall \ \
t\in (0,1),
\end{equation*}%
and, therefore, 
\begin{equation*}
\big(C_{\alpha ,p,\infty }(E)\big)^{\frac{1}{p}}\leq P_{\alpha ,p,\infty
}(E).
\end{equation*}

(3) Suppose $0<H^n(E), H^{n-1}(E)<\infty$. If $P_{\alpha ,p,\infty }(E)$ is
finite, then 
\begin{equation*}
\Vert 1_{E}(x+h)-1_{E}(x)\Vert _{L^{p}}^{p}\leq \big(P_{\alpha ,p,\infty }(E)%
\big)^{p}|h|^{p\alpha }\quad \forall \quad h\in \mathbb{R}^{n}.
\end{equation*}%
A straightforward computation gives 
\begin{equation*}
\Vert 1_{E}(x+h)-1_{E}(x)\Vert _{L^{p}}^{p}=|E|+|E-h|-2|(E-h)\cap
E|=2(|E|-|(E-h)\cap E|)=\Vert 1_{E}-1_{E-h}\Vert _{L^{1}}.
\end{equation*}%
Therefore, for each natural number $k$, we have 
\begin{equation*}
\Vert 1_{E}-1_{E-h}\Vert _{L^{1}}\leq \sum_{j=0}^{k-1}\Big\|1_{E-\frac{j}{k}%
h}-1_{E-\frac{j+1}{k}h}\Big\|_{L^{1}}\leq \big(P_{\alpha ,p,\infty }(E)\big)%
^{p}k^{1-p\alpha }|h|^{p\alpha }.
\end{equation*}%
Now, if $\alpha>p^{-1}$, then letting $k\to\infty$ in the last estimation
gives 
\begin{equation*}
\Vert 1_{E}-1_{E-h}\Vert _{L^{1}}=0\quad \forall \quad h\in \mathbb{R}^{n},
\end{equation*}
and hence 
\begin{equation*}
\lim_{|h|\rightarrow \infty }\Vert 1_{E}-1_{E-h}\Vert _{L^{1}}=0.
\end{equation*}
However, we have 
\begin{equation*}
\lim_{|h|\rightarrow \infty }\Vert 1_{E}-1_{E-h}\Vert _{L^{1}}=2|E|\in
(0,\infty),
\end{equation*}%
thereby reaching a contradiction. Therefore, we must have $\alpha \leq
p^{-1} $. Conversely, if $\alpha \leq p^{-1}$ holds, then an application of 
\cite[Theorem 1]{Sch} to the symmetric difference of $E$ and $E-h$ readily
shows that 
\begin{equation*}
2(|E|-|E\cap (E-h)|)\leq |h|H^{n-1}(E)\quad \forall \quad h\in \mathbb{R}%
^{n}.
\end{equation*}%
Therefore, by the classical isoperimetric inequality 
\begin{equation*}
\left( \frac{H^{n}(E)}{\Big(\frac{\pi ^{\frac{n}{2}}}{\Gamma (1+\frac{n}{2})}%
\Big)}\right) ^{\frac{1}{n}}\leq \left( \frac{H^{n-1}(E)}{\Big(\frac{\pi ^{%
\frac{n-1}{2}}}{\Gamma (1+\frac{n-1}{2})}\Big)}\right) ^{\frac{1}{n-1}},
\end{equation*}%
and the hypothesis $p^{-1}-\alpha \geq 0$, we find 
\begin{align*}
P_{\alpha ,p,\infty }(E)& \leq \max \left\{ \sup_{h\in \mathbb{B}%
^{n}}|h|^{p^{-1}-\alpha }\big(H^{n-1}(E)\big)^{\frac{1}{p}},\ \sup_{h\in 
\mathbb{R}^{n}\setminus \mathbb{B}^{n}}|h|^{-\alpha }\big(2H^{n}(E)\big)^{%
\frac{1}{p}}\right\} \\
& \leq \big(H^{n-1}(E)\big)^{\frac{1}{p}}+\big(2H^{n}(E)\big)^{\frac{1}{p}}
\\
& \leq \big(H^{n-1}(E)\big)^{\frac{1}{p}}+\left( 2\left( \frac{\pi ^{\frac{n%
}{2}}}{\Gamma (1+\frac{n}{2})}\right) \left( \frac{H^{n-1}(E)}{\Big(\frac{%
\pi ^{\frac{n-1}{2}}}{\Gamma (1+\frac{n-1}{2})}\Big)}\right) ^{\frac{n}{n-1}%
}\right) ^{\frac{1}{p}} \\
& <\infty .
\end{align*}

(4) We notice that if $f\in \mathsf{A}_{\alpha ,p,\infty }\big(B(x_{0},r_{0})%
\big)$ and $f_{r_{0}}(x)=f(r_{0}x)$ then 
\begin{equation*}
\Vert f_{r_{0}}\Vert _{\dot{\Lambda}_{\alpha }^{p,\infty
}}^{p}=r_{0}^{\alpha p-n}\Vert f\Vert _{\dot{\Lambda}_{\alpha }^{p,\infty
}}^{p}.
\end{equation*}%
Therefore, by the definition of $C_{\alpha ,p,\infty }$, we the desired
formula follows.

(5) Let $p\in \lbrack 1,\frac{n}{\alpha })$. Using Propositions \ref{p22}%
(2)-(3) \& \ref{p23}(4) we see that if $E\subseteq \cup _{j=1}^{\infty
}B(x_{j},r_{j})$ then 
\begin{equation*}
C_{\alpha ,p,\infty }(E)\leq \sum_{j=1}^{\infty }C_{\alpha ,p,\infty }\big(%
B(x_{j},r_{j})\big)=\sum_{j=1}^{\infty }r_{j}^{n-\alpha p}C_{\alpha
,p,\infty }(\mathbb{B}^{n}).
\end{equation*}%
Therefore, by the definition of $H^{n-\alpha p}(E),$ we have 
\begin{equation*}
C_{\alpha ,p,\infty }(E)\leq \left( \frac{\pi ^{\frac{n-\alpha p}{2}}}{%
\Gamma (1+\frac{n-\alpha p}{2})}\right) ^{-1}H^{n-\alpha p}(E)C_{\alpha
,p,\infty }(\mathbb{B}^{n}).
\end{equation*}

If $p\geq \frac{n}{\alpha }$, then in view of Proposition \ref{p21} and the
ball-decomposition of open sets in $\mathbb{R}^{n}$, it is enough to verify
the result for balls, $B(x_{0},R_{0}).$ We shall consider two cases: Suppose
first that $p>\frac{n}{\alpha },$ then, as $R_{0}\rightarrow \infty ,$ we
have 
\begin{equation*}
C_{\alpha ,p,\infty }\big(B(x_{0},R_{0})\big)=R_{0}^{n-\alpha p}C_{\alpha
,p,\infty }(\mathbb{B}^{n})\rightarrow 0.
\end{equation*}%
Since $C_{\alpha ,p,\infty }$ is monotone, we see that 
\begin{equation*}
C_{\alpha ,p,\infty }\big(B(x_{0},r_{0})\big)\leq C_{\alpha ,p,\infty }\big(%
B(x_{0},R_{0})\rightarrow 0\ \hbox{as}\ R_{0}\rightarrow \infty ,
\end{equation*}%
whence 
\begin{equation*}
C_{\alpha ,p,\infty }\big(B(x_{0},r_{0})\big)=0.
\end{equation*}

Suppose now that $p=\frac{n}{\alpha }$. We have, 
\begin{equation*}
C_{\alpha ,p,\infty }\big(B(x_{0},R_{0})\big)=C_{\alpha ,p,\infty }(\mathbb{B%
}^{n})\ \forall \ R_{0}>0.
\end{equation*}%
By Proposition \ref{p22}(5) we have 
\begin{equation*}
\lim_{R_{0}\rightarrow 0}C_{\alpha ,p,\infty }\big(B(x_{0},R_{0})\big)=0,
\end{equation*}%
whence 
\begin{equation*}
C_{\alpha ,p,\infty }\big(B(x_{0},r_{0})\big)=C_{\alpha ,p,\infty }(\mathbb{B%
}^{n})=0\ \ \forall \ \ r_{0}>0.
\end{equation*}
\end{proof}

\begin{cor}
\label{c21} Let $K$ be a compact subset of $\mathbb{R}^{n}$ and let $%
\partial K$ be its boundary. Let $\mathsf{O}(K)$ stand for the class of all
open sets $O\subset \mathbb{R}^{n}$ with $O\supset K$.

\begin{enumerate}
\item If $(\alpha,p)\in(0,1)\times[1,\frac{n}{\alpha})$, then 
\begin{equation*}
C_{\alpha,p,\infty}(K)=C_{\alpha,p,\infty}(\partial K).
\end{equation*}

\item If $(\alpha ,p)\in (0,1)\times \lbrack 1,\frac{1}{\alpha })$, then 
\begin{equation*}
C_{\alpha ,p,\infty }(K)\leq \inf_{O\in \mathsf{O}(K)}\big(P_{\alpha
,p,\infty }(O)\big)^{p},
\end{equation*}%
with equality when $(\alpha ,p)\in (0,1)\times \{1\}$, i.e., 
\begin{equation*}
C_{\alpha,1,\infty }(K)=\inf_{O\in \mathsf{O}(K)}P_{\alpha ,1,\infty }(O).
\end{equation*}
\end{enumerate}
\end{cor}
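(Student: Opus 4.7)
For part (1), the easy direction $C_{\alpha,p,\infty}(\partial K)\le C_{\alpha,p,\infty}(K)$ is immediate from $\partial K\subset K$ and Proposition \ref{p22}(2). For the reverse $C_{\alpha,p,\infty}(K)\le C_{\alpha,p,\infty}(\partial K)$, I would use outer regularity (Proposition \ref{p22}(4)) combined with a covering argument: given $\epsilon>0$, pick an open $N\supset\partial K$ with $C_{\alpha,p,\infty}(N)\le C_{\alpha,p,\infty}(\partial K)+\epsilon$, and observe that $K\setminus N$ is a compact subset of $\mathrm{int}(K)$. Cover $K\setminus N$ by a finite family of balls $\{B(x_j,r_j)\}_{j=1}^J$ with $\overline{B(x_j,r_j)}\subset\mathrm{int}(K)$, set $O=N\cup\bigcup_j B(x_j,r_j)\in\mathsf{O}(K)$, and combine Propositions \ref{p22}(3) and \ref{p23}(4) to get
\[
C_{\alpha,p,\infty}(K)\le C_{\alpha,p,\infty}(O)\le C_{\alpha,p,\infty}(\partial K)+\epsilon+C_{\alpha,p,\infty}(\mathbb{B}^n)\sum_{j=1}^J r_j^{n-\alpha p}.
\]
The range restriction $p<n/\alpha$ gives $n-\alpha p>0$, which in principle permits the last sum to be made arbitrarily small. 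The main obstacle is precisely the fine choice of this cover: since $K\setminus N$ may have positive Lebesgue measure, making $\sum_j r_j^{n-\alpha p}$ small requires either selecting $N$ thick enough that $K\setminus N$ has small $(n-\alpha p)$-dimensional Hausdorff content (in the spirit of Proposition \ref{p23}(5)), or replacing the covering argument with a direct construction of an admissible function for $K$ out of one for $\partial K$ via a lattice operation that preserves the $\dot\Lambda_\alpha^{p,\infty}$-norm.

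For part (2), the first inequality chains two facts: for every $O\in\mathsf{O}(K)$, monotonicity (Proposition \ref{p22}(2)) gives $C_{\alpha,p,\infty}(K)\le C_{\alpha,p,\infty}(O)$, while Proposition \ref{p23}(2) gives $C_{\alpha,p,\infty}(O)\le(P_{\alpha,p,\infty}(O))^p$; the range $p<1/\alpha$ keeps $P_{\alpha,p,\infty}(O)$ finite through Proposition \ref{p23}(3). Taking the infimum over $O$ closes this direction. For the equality at $p=1$, what remains is $C_{\alpha,1,\infty}(K)\ge\inf_{O\in\mathsf{O}(K)}P_{\alpha,1,\infty}(O)$. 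Given $f\in\mathsf{A}_{\alpha,1,\infty}(K)$, truncated so that $0\le f\le 1$ and $f=1$ on an open neighbourhood of $K$, the open level sets $O_t=\{f>t\}$ for $t\in(0,1)$ all belong to $\mathsf{O}(K)$, and the layer-cake identity used in the proof of Proposition \ref{p23}(1) reads
\[
|h|^{-\alpha}\|\Delta_h f\|_{L^1}=\int_0^1|h|^{-\alpha}\|\Delta_h 1_{O_t}\|_{L^1}\,dt.
\]
A mean-value argument in $t$ yields, for each $h$, some $t(h)\in(0,1)$ with $|h|^{-\alpha}\|\Delta_h 1_{O_{t(h)}}\|_{L^1}\le\|f\|_{\dot\Lambda_\alpha^{1,\infty}}$; extracting from this a single level $t_\ast\in(0,1)$ with $P_{\alpha,1,\infty}(O_{t_\ast})\le\|f\|_{\dot\Lambda_\alpha^{1,\infty}}$ (the near-maximizing $h$ being chosen uniformly over $t$ in a small interval around $t_\ast$) yields $\inf_{O\in\mathsf{O}(K)}P_{\alpha,1,\infty}(O)\le\|f\|_{\dot\Lambda_\alpha^{1,\infty}}$, and infimising over $f$ delivers the equality. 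The delicate step and main obstacle is this uniform-in-$h$ extraction of $t_\ast$ realising the supremum in the definition of $P_{\alpha,1,\infty}$; this hinges on the exactness of the $p=1$ layer-cake identity, precisely the feature that fails for $p>1$ and thus explains the restriction of the equality to $p=1$.
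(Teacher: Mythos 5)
Your handling of the easy directions (monotonicity for $C_{\alpha,p,\infty}(\partial K)\le C_{\alpha,p,\infty}(K)$, and the chain $C_{\alpha,p,\infty}(K)\le C_{\alpha,p,\infty}(O)\le (P_{\alpha,p,\infty}(O))^p$ followed by an infimum over $O$ for the first half of part (2)) agrees with the paper. But both substantive steps are left open, and in each case the route you actually develop cannot close. For part (1), the covering argument fails for exactly the reason you flag: $K\setminus N$ may have positive Lebesgue measure, hence positive $H^{n-\alpha p}$-content (since $n-\alpha p<n$), so $\sum_j r_j^{n-\alpha p}$ is bounded away from $0$ for every admissible cover; no choice of $N$ repairs this when $\mathrm{int}(K)$ is nonempty. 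The paper's proof is precisely the alternative you only gesture at: given $f\in\mathsf{A}_{\alpha,p,\infty}(\partial K)$, set $g=\max\{f,1\}$ on $K$ and $g=f$ on $\mathbb{R}^n\setminus K$; since $f\ge 1$ on a neighbourhood of $\partial K$, $g$ is continuous, belongs to $\mathsf{A}_{\alpha,p,\infty}(K)$, and satisfies $\|g\|_{\dot{\Lambda}_\alpha^{p,\infty}}^p\le\|f\|_{\dot{\Lambda}_\alpha^{p,\infty}}^p$, whence $C_{\alpha,p,\infty}(K)\le\|f\|_{\dot{\Lambda}_\alpha^{p,\infty}}^p$ and the reverse inequality follows. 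Naming this "lattice operation" as a possibility is not a proof; the verification that the operation does not increase the seminorm is the entire content of the step, and it is not routine (it must be checked across pairs $x\in K$, $x+h\notin K$).

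For the equality at $p=1$, you again stop at the crux. Your mean-value step produces, for each $h$, a level $t(h)$ with $|h|^{-\alpha}\|\Delta_h 1_{O_{t(h)}}\|_{L^1}\le\|f\|_{\dot{\Lambda}_\alpha^{1,\infty}}$, but what is required is a single open superset of $K$ whose supremum over \emph{all} $h$ is controlled; this is a sup--inf interchange, and "extracting a $t_*$ with the near-maximizing $h$ chosen uniformly" is a restatement of the difficulty, not an argument for it. The paper takes a different path: it inserts $\inf_{O\in\mathsf{O}(K)}\|\Delta_h 1_O\|_{L^1}$ (infimum over all open supersets of $K$, not merely the level sets of $f$) under the coarea integral in $t$, which is legitimate because each $O_t=\{f>t\}$ with $t\in(0,1)$ lies in $\mathsf{O}(K)$, arrives at $\|f\|_{\dot{\Lambda}_\alpha^{1,\infty}}\ge\sup_{h}|h|^{-\alpha}\inf_{O\in\mathsf{O}(K)}\|\Delta_h 1_O\|_{L^1}$, and then passes to a near-minimizing $\tilde O$ to compare with $\inf_{O\in\mathsf{O}(K)}P_{\alpha,1,\infty}(O)$. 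At minimum you should reach that intermediate inequality explicitly; as written, your proposal records the correct targets and correctly diagnoses the obstacles in both halves, but proves neither.
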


\begin{proof}
(1) By Propositions \ref{p22}(2)\&\ref{p23}(5), it is enough to prove 
\begin{equation*}
C_{\alpha ,p,\infty }(K)\leq C_{\alpha ,p,\infty }(\partial K).
\end{equation*}%
Given $f\in \mathsf{A}_{\alpha ,p,\infty }(\partial K)$ let us define 
\begin{equation*}
g=%
\begin{cases}
\max \{f,1\}\quad \hbox{on}\quad K; \\ 
f\quad \hbox{on}\quad \mathbb{R}^{n}\setminus K.%
\end{cases}%
\end{equation*}%
Then, 
\begin{equation*}
g\in \mathsf{A}_{\alpha ,p,\infty }(K)\ \ \&\ \ \Vert g\Vert _{\dot{\Lambda}%
_{\alpha }^{p,\infty }}^{p}\leq \Vert f\Vert _{\dot{\Lambda}_{\alpha
}^{p,\infty }}^{p}.
\end{equation*}%
Combining this fact with the definition of $C_{\alpha ,p,\infty }(K)$,
readily yields 
\begin{equation*}
C_{p,\alpha ,\infty }(K)\leq \Vert f\Vert _{\dot{\Lambda}_{\alpha
}^{p,\infty }}^{p},
\end{equation*}%
and the result follows.

(2) The desired inequality follows from Propositions \ref{p22}(2) \& \ref%
{p23}(2). Now, suppose that $p=1$ and $f\in \mathsf{A}_{\alpha ,1,\infty
}(K) $. Note that, if $t\in (0,1),$ then the upper-level set $\{x\in \mathbb{%
R}^{n}:f(x)>t\},$ is an open set containing $K$. Given $h\in \mathbb{R}^{n},$
and $\epsilon >0$, there exists $\tilde{O}\in \mathsf{O}(K)$ such that 
\begin{equation*}
\inf_{O\in \mathsf{O}(K)}\Vert 1_{O}(\cdot +h)-1_{O}(\cdot )\Vert
_{L^{1}}\geq \Vert 1_{\tilde{O}}(\cdot +h)-1_{\tilde{O}}(\cdot )\Vert
_{L^{1}}-\epsilon .
\end{equation*}%
Therefore, by Fubini's theorem, it follows that 
\begin{align*}
\Vert f\Vert _{\dot{\Lambda}_{\alpha }^{1,\infty }}& =\sup_{h\in \mathbb{R}%
^{n}}|h|^{-\alpha }\Vert \Delta _{h}f\Vert _{L^{1}} \\
& =\sup_{h\in \mathbb{R}^{n}}\int_{0}^{\infty }|h|^{-\alpha }\Vert
1_{\{f>t\}}(\cdot +h)-1_{\{f>t\}}(\cdot )\Vert _{L^{1}}\,dt \\
& \geq \sup_{h\in \mathbb{R}^{n}}\int_{0}^{1}|h|^{-\alpha }\Vert
1_{\{f>t\}}(\cdot +h)-1_{\{f>t\}}(\cdot )\Vert _{L^{1}}\,dt \\
& \geq \sup_{h\in \mathbb{R}^{n}}\int_{0}^{1}|h|^{-\alpha }\inf_{O\in 
\mathsf{O}(K)}\Vert 1_{O}(\cdot +h)-1_{O}(\cdot )\Vert _{L^{1}}\,dt \\
& =\sup_{h\in \mathbb{R}^{n}}|h|^{-\alpha }\inf_{O\in \mathsf{O}(K)}\Vert
1_{O}(\cdot +h)-1_{O}(\cdot )\Vert _{L^{1}} \\
& \geq \sup_{h\in \mathbb{R}^{n}}|h|^{-\alpha }\Big(\Vert 1_{\tilde{O}%
}(\cdot +h)-1_{\tilde{O}}(\cdot )\Vert _{L^{1}}-\epsilon \Big) \\
& \geq \sup_{h\in \mathbb{R}^{n}}|h|^{-\alpha }\Vert 1_{\tilde{O}}(\cdot
+h)-1_{\tilde{O}}(\cdot )\Vert _{L^{1}}-\epsilon \inf_{h\in \mathbb{R}%
^{n}}|h|^{-\alpha } \\
& =P_{\alpha ,1,\infty }(\tilde{O}) \\
& \geq \inf_{O\in \mathsf{O}(K)}P_{\alpha ,1,\infty }(O).
\end{align*}%
As a consequence, we find 
\begin{equation*}
C_{\alpha ,1,\infty }(K)=\inf_{O\in \mathsf{O}(K)}P_{\alpha ,1,\infty }(O).
\end{equation*}
\end{proof}

\section{$\infty$-Besov restrictions, extensions and multipliers}

\label{s3}

\subsection{$\infty$-Besov restrictions}

\label{s31}

Proposition \ref{p23} tells us that $C_{\alpha ,p,\infty }$ is interesting
only when $1\leq p<\frac{n}{\alpha }$. Moreover, according to Proposition %
\ref{p22} this capacity is an outer measure. Therefore, it is a good fit for
the so-called Lebesgue theory for outer measures developed recently in \cite%
{DT}. We can thus try to measure the trace/restriction of a $\Lambda
_{\alpha }^{p,\infty }$-function on a given compact exceptional set via
looking for an outer measure $\mu $ concentrated on this compact set such
that $\Lambda _{\alpha }^{p,\infty }$ embeds continuously into a weak $\mu $%
-based Lorentz space. More precisely, we have the following
trace/restriction result.

\begin{prop}
\label{p31} Let $(\alpha ,p,q)\in (0,1)\times \lbrack 1,\frac{n}{\alpha }%
)\times \lbrack 1,\infty ),$ and let $\mu $ be a nonnegative outer measure
on $\mathbb{R}^{n}$. Consider the following statements:

\begin{enumerate}
\item $\sup_{t\in (0,\infty)}\Big(t^q\mu\big(\{x\in\mathbb{R}^n: |f(x)|>t\}%
\big)\Big)^\frac1q\lesssim \|f\|_{\dot{\Lambda}_\alpha^{p,\infty}}\ \
\forall\ \ f\in\Lambda_{\alpha}^{p,\infty}\cap C(\mathbb{R}^n). $

\item $\mu(E)\lesssim\big(C_{\alpha,p,\infty}(E)\big)^\frac{q}{p}\ \
\forall\ \ \hbox{Borel\ set}\ \ E\subset\mathbb{R}^n. $

\item $\sup_{(t,x,r)\in (0,\infty)\times\mathbb{R}^n\times(0,\infty)}\Big(%
t^q\mu\big(\{y\in B(x,r): |f(y)|>t\}\big)\Big)^\frac1q\lesssim \|f\|_{\dot{%
\Lambda}_\alpha^{p,\infty}}\ \ \forall\ \
f\in\Lambda_{\alpha}^{p,\infty}\cap C(\mathbb{R}^n). $

\item $\mu \big(B(x,r)\big)\lesssim r^{\frac{q(n-\alpha p)}{p}}\ \ \forall \
\ \hbox{Euclidean\ ball}\ \ B(x,r)\subset \mathbb{R}^{n}.$
\end{enumerate}

Then, the following equivalences hold: $(1)\Leftrightarrow (2)$ and $%
(3)\Leftrightarrow (4)$.
\end{prop}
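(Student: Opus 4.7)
The plan is to establish both equivalences via a symmetric scheme: the ``capacity bound'' $\Rightarrow$ ``weak-type'' direction invokes the weak-type capacity estimate of Proposition~\ref{p23}(1), while the ``weak-type'' $\Rightarrow$ ``capacity bound'' direction exploits the definition of $C_{\alpha,p,\infty}$ through admissible test functions in $\mathsf{A}_{\alpha,p,\infty}$.

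For $(1)\Leftrightarrow(2)$: To prove $(2)\Rightarrow(1)$, I would fix $f\in\Lambda_\alpha^{p,\infty}\cap C(\mathbb{R}^n)$ and $t>0$, observe that $\{|f|>t\}$ is open (hence Borel) by continuity of $f$, apply (2) to this level set, and then invoke Proposition~\ref{p23}(1) to get $t^q\mu(\{|f|>t\})\lesssim (C_{\alpha,p,\infty}(\{|f|>t\}))^{q/p}\lesssim \|f\|^q_{\dot\Lambda^{p,\infty}_\alpha}$. For $(1)\Rightarrow(2)$, I would fix a Borel set $E$ and an arbitrary admissible $f\in\mathsf{A}_{\alpha,p,\infty}(E)$; since $f\ge 1$ on a neighborhood of $E$ we have $E\subseteq\{f>t\}$ for every $t\in(0,1)$, so (1) yields $t^q\mu(E)\le t^q\mu(\{f>t\})\lesssim \|f\|^q_{\dot\Lambda^{p,\infty}_\alpha}$. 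Letting $t\to 1^-$ and taking the infimum over admissible $f$ gives $\mu(E)\lesssim(C_{\alpha,p,\infty}(E))^{q/p}$.

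For $(3)\Rightarrow(4)$, I would localize this argument. Given a ball $B(x_0,r_0)$, Proposition~\ref{p23}(4) gives $C_{\alpha,p,\infty}(B(x_0,r_0))=r_0^{n-\alpha p}C_{\alpha,p,\infty}(\mathbb{B}^n)$, so I would choose a near-optimal $f\in\mathsf{A}_{\alpha,p,\infty}(B(x_0,r_0))$ with $\|f\|^p_{\dot\Lambda^{p,\infty}_\alpha}\approx r_0^{n-\alpha p}$. Since $f\ge 1$ on a neighborhood containing $B(x_0,r_0)$, applying (3) to this $f$, the ball $B(x_0,r_0)$, and $t=1/2$ gives $\mu(B(x_0,r_0))\lesssim \|f\|^q_{\dot\Lambda^{p,\infty}_\alpha}\lesssim r_0^{q(n-\alpha p)/p}$.

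The hard part will be $(4)\Rightarrow(3)$: passing from a pointwise ball-growth condition back to a localized weak-type inequality. Fix $f$, $t$, and $B(x_0,r_0)$; the set $E=\{y\in B(x_0,r_0):|f(y)|>t\}$ is open. My plan is to cover $E$ by balls $\{B(y_j,r_j)\}$ (e.g.\ via a Whitney decomposition of $E$), then use $\sigma$-subadditivity of $\mu$ together with (4) to estimate $\mu(E)\le\sum_j\mu(B(y_j,r_j))\lesssim\sum_j r_j^{q(n-\alpha p)/p}$. Taking the infimum over covers bounds $\mu(E)$ by a Hausdorff-type outer measure on $E$, which I would then control by the Besov capacity of $E$ using Proposition~\ref{p23}(5), and finally by $t^{-q}\|f\|^q_{\dot\Lambda^{p,\infty}_\alpha}$ via Proposition~\ref{p23}(1). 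The delicate step is reconciling the exponent $q(n-\alpha p)/p$ appearing in the Hausdorff-type bound with the exponent $n-\alpha p$ appearing in Proposition~\ref{p23}(5); for $q\ne p$ this may require a more careful Whitney-style choice of cover localized at the scale $r_0$, or an application of the sub-/super-additivity of $x\mapsto x^{q/p}$ to the summed radii.
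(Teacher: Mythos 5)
Your treatment of $(1)\Leftrightarrow(2)$ and of $(3)\Rightarrow(4)$ is correct and is essentially the paper's own argument: for the directions that produce a capacity (or ball-growth) bound you test the weak-type inequality on an admissible $f\in\mathsf{A}_{\alpha,p,\infty}(E)$ (resp.\ $\mathsf{A}_{\alpha,p,\infty}(B(x_0,r_0))$), let $t$ tend to $1$, and pass to the infimum, using Proposition \ref{p23}(4) to convert $C_{\alpha,p,\infty}(B(x_0,r_0))$ into $r_0^{n-\alpha p}$; for $(2)\Rightarrow(1)$ you apply the weak-type capacity estimate of Proposition \ref{p23}(1) to the open level set. (A minor simplification: in $(3)\Rightarrow(4)$ you need not manufacture a near-optimal $f$ with $\Vert f\Vert^p_{\dot{\Lambda}^{p,\infty}_{\alpha}}\approx r_0^{n-\alpha p}$ beforehand; estimating first and then taking the infimum over all admissible $f$, as you do in $(1)\Rightarrow(2)$, is cleaner.)

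The genuine gap is in $(4)\Rightarrow(3)$, and you have in fact located it yourself. Your proposed chain is $\mu(E)\le\sum_j\mu(B(y_j,r_j))\lesssim\sum_j r_j^{q(n-\alpha p)/p}\lesssim\big(H^{n-\alpha p}(E)\big)^{q/p}\lesssim\big(C_{\alpha,p,\infty}(E)\big)^{q/p}\lesssim t^{-q}\Vert f\Vert^q_{\dot{\Lambda}^{p,\infty}_{\alpha}}$, but the step ``control the Hausdorff-type bound by the Besov capacity'' runs Proposition \ref{p23}(5) backwards: that proposition bounds the capacity \emph{above} by the Hausdorff content, $C_{\alpha,p,\infty}(E)\lesssim H^{n-\alpha p}(E)$, and no reverse inequality is available in the paper (nor does one hold for arbitrary sets). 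So your covering argument reduces (3) to a strong Hausdorff-content estimate for level sets of $\Lambda^{p,\infty}_{\alpha}$-functions that is not among the tools at hand, and the exponent mismatch for $q\neq p$ that you flag is a second, independent obstruction in the same step. For comparison, the paper's own proof of $(4)\Rightarrow(3)$ is a one-line appeal to Proposition \ref{p23}(1): it asserts $t^{q}\mu(\{y\in B(x,r):|f(y)|>t\})\lesssim\big(t^{p}C_{\alpha,p,\infty}(\{y\in B(x,r):|f(y)|>t\})\big)^{q/p}\lesssim\Vert f\Vert^{q}_{\dot{\Lambda}^{p,\infty}_{\alpha}}$, which tacitly uses the comparison $\mu(A)\lesssim\big(C_{\alpha,p,\infty}(A)\big)^{q/p}$ for the sets $A=\{y\in B(x,r):|f(y)|>t\}$ without explaining how this follows from the ball-growth hypothesis (4) alone; since the remark following the proposition (item (iii)) states that (4) does not imply (2), the paper's version of this step is itself not justified as written. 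So you have not missed a clean argument that the paper supplies, but as it stands your route for $(4)\Rightarrow(3)$ does not close either.
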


\begin{proof}
$(1)\Leftrightarrow (2)$ Suppose that (1) holds. Let $f\in \mathsf{A}%
_{\alpha ,p,\infty }(E),$ and let $E\subset \mathbb{R}^{n}$ be a Borel set.
Then, 
\begin{equation*}
t^{q}\mu (E)\leq t^{q}\mu \big(\{x\in \mathbb{R}^{n}:|f(x)|>t\}\big)\lesssim
\Vert f\Vert _{\dot{\Lambda}_{\alpha }^{p,\infty }}^{q}\ \ \forall \ \ t\in
(0,1).
\end{equation*}%
Letting $t$ tend to $1,$ and using the definition of $C_{\alpha ,p,\infty
}(E)$ readily yields (2). Conversely, if (2) holds true, then an application
of Proposition \ref{p23}(1) gives 
\begin{equation*}
t^{q}\mu \big(\{x\in \mathbb{R}^{n}:|f(x)|>t\}\big)\lesssim \Big(%
t^{p}C_{\alpha ,p,\infty }\big(\{x\in \mathbb{R}^{n}:|f(x)|>t\}\big)\Big)^{%
\frac{q}{p}}\lesssim \Vert f\Vert _{\dot{\Lambda}_{\alpha }^{p,\infty
}}^{q}\,,
\end{equation*}%
whence (1).

$(3)\Leftrightarrow (4)$ Suppose that (3) is valid, let $f\in \mathsf{A}%
_{\alpha ,p,\infty }\big(B(x,r)\big),$ where $B(x,r)$ is an Euclidean ball.
Then, 
\begin{equation*}
t^{q}\mu \big(B(x,r)\big)\leq t^{q}\mu \big(\{y\in B(x,r):|f(y)|>t\}\big)%
\lesssim \Vert f\Vert _{\dot{\Lambda}_{\alpha }^{p,\infty }}^{q}\ \ \forall
\ \ t\in (0,1).
\end{equation*}%
Letting $t\rightarrow 1,$ and using the definition of $C_{\alpha ,p,\infty }%
\big(B(x,r)\big),$ as well as Proposition \ref{p23}(4), yields 
\begin{equation*}
\mu \big(B(x,r)\big)\lesssim \Big(C_{\alpha ,p,\infty }\big(B(x,r)\big)\Big)%
^{\frac{q}{p}}\approx r^{\frac{q(n-\alpha p)}{p}},
\end{equation*}%
whence (4). Conversely, suppose that (4) is true. Applying Proposition \ref%
{p23}(1) we find that 
\begin{equation*}
t^{q}\mu \big(\{y\in B(x,r):|f(x)|>t\}\big)\lesssim \Big(t^{p}C_{\alpha
,p,\infty }\big(\{y\in B(x,r):|f(y)|>t\}\big)\Big)^{\frac{q}{p}}\lesssim
\Vert f\Vert _{\dot{\Lambda}_{\alpha }^{p,\infty }}^{q}\,
\end{equation*}%
holds for any Euclidean ball $B(x,r)\subset \mathbb{R}^{n}$, whence (3)
holds.
\end{proof}

\begin{rem}
\label{r31} Three comments are in order.

\begin{itemize}
\item[(i)] If $\|f\|_{L^{q,\infty}_\mu}=\sup_{t\in (0,\infty)}\Big(t^q\mu%
\big(\{x\in\mathbb{R}^n: |f(x)|>t\}\big)\Big)^\frac1q, $ then Lyapunov's
inequality (cf. \cite[Proposition 5.3]{CNF}) is: 
\begin{equation*}
\|f\|_{L^{q,\infty}_\mu}\le
\|f\|^{1-\theta}_{L^{q_0,\infty}_\mu}\|f\|^{\theta}_{L^{q_1,\infty}_\mu}%
\quad\forall\quad \frac1{q}=\frac{1-\theta}{q_0}+\frac{\theta}{q_1}\ \ \&\ \
(\theta,q_0,q_1)\in (0,1)\times[1,\infty)\times[1,\infty).
\end{equation*}
Similarly, one has: 
\begin{equation*}
\|f\|_{\dot{\Lambda}^{p,\infty}_\alpha}\le \|f\|^{1-\theta}_{\dot{\Lambda}%
_\alpha^{p_0,\infty}}\|f\|^{\theta}_{\dot{\Lambda}_\alpha^{p_1,\infty}}\quad%
\forall\quad \frac1{p}=\frac{1-\theta}{p_0}+\frac{\theta}{p_1}\ \ \&\ \
(\theta,p_0,p_1)\in (0,1)\times[1,\infty)\times[1,\infty).
\end{equation*}

\item[(ii)] If we let $\mu $ be the $n$-dimensional Lebesgue measure in
Proposition \ref{p31}(1) and we have $n-\alpha p>0$, then the limiting case
of \cite[Theorem 2.8]{KL} or \cite[Corollary 3.3]{S} ensures 
\begin{equation*}
\Vert f\Vert _{L_{\mu }^{\frac{pn}{n-\alpha p},\infty }}\leq 2^{\frac{1}{p}}%
\Big(\frac{pn}{n-\alpha p}\Big)\Vert f\Vert _{\dot{\Lambda}_{\alpha
}^{p,\infty }}\quad \forall \quad f\in \Lambda _{\alpha }^{p,\infty }\cap C(%
\mathbb{R}^n).
\end{equation*}%
Therefore, Proposition \ref{p31}(1) is valid for $q=\frac{pn}{n-\alpha p}$
and the $n$-dimensional Lebesgue measure. Consequently, one has the
following isocapacitary - isoperimetric inequality 
\begin{equation*}
|E|^{\frac{n-\alpha p}{pn}}\leq 2^{\frac{1}{p}}\Big(\frac{pn}{n-\alpha p}%
\Big)\big(C_{\alpha ,p,\infty }(E)\big)^{\frac{1}{p}}\leq 2^{\frac{1}{p}}%
\Big(\frac{pn}{n-\alpha p}\Big)P_{\alpha ,p,\infty }(E)\quad \forall \quad %
\hbox{Borel\ set}\quad E\subset \mathbb{R}^{n}.
\end{equation*}%
We should also remark that, due to Corollary \ref{c21}, the important case
here corresponds to $p\in \lbrack 1,\alpha ^{-1}]$.

\item[(iii)] Proposition \ref{p31}(2) implies Proposition \ref{p31}(4), but
the converse does not necessarily hold.
\end{itemize}
\end{rem}

\begin{cor}
\label{c31} Let $(\alpha ,p,q)\in (0,1)\times \lbrack 1,\frac{n}{\alpha }%
)\times \lbrack 1,\infty )$, let $\mu $ be a nonnegative outer measure on $%
\mathbb{R}^{n}$, and let $\phi :\mathbb{R}^{n}\rightarrow \mathbb{R}^{n}$ be
a Borel map. Referring to the numbered statements below the following
equivalences hold true: $(1)\Leftrightarrow (2)$ and $(3)\Leftrightarrow
(4). $

\begin{enumerate}
\item $\|f\circ\phi\|_{L^{q,\infty}_\mu}\lesssim \|f\|_{\dot{\Lambda}%
_\alpha^{p,\infty}}\ \ \forall\ \ f\in\Lambda_{\alpha}^{p,\infty}\cap C(%
\mathbb{R}^n). $

\item $\mu\big(\phi^{-1}(E)\big)\lesssim \big(C_{\alpha,p,\infty}(E)\big)^%
\frac{q}{p}\ \forall\ \hbox{Borel\ set}\ E\subset\mathbb{R}^n$.

\item $\sup_{(t,x,r)\in (0,\infty)\times\mathbb{R}^n\times(0,\infty)}\Big(%
t^q\mu\big(\{y\in\phi^{-1} \big(B(x,r)\big): |f\circ\phi(y)|>t\}\big)\Big)%
^\frac1q\lesssim \|f\|_{\dot{\Lambda}_\alpha^{p,\infty}}\ \ \forall\ \
f\in\Lambda_{\alpha}^{p,\infty}\cap C(\mathbb{R}^n). $

\item $\mu\big(\phi^{-1}\big(B(x,r))\big)\lesssim r^\frac{q(n-\alpha p)}{p}\
\forall\ \hbox{Euclidean\ ball}\ B(x,r)\subset\mathbb{R}^n$.
\end{enumerate}
\end{cor}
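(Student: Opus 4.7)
The plan is to reduce Corollary \ref{c31} to Proposition \ref{p31} by passing to the pushforward outer measure. Specifically, I would introduce
\begin{equation*}
\nu(E)=\mu\bigl(\phi^{-1}(E)\bigr),\quad E\subset\mathbb{R}^n,
\end{equation*}
and first check that $\nu$ is a nonnegative outer measure on $\mathbb{R}^n$: monotonicity and countable subadditivity of $\nu$ follow from the corresponding properties of $\mu$ together with $\phi^{-1}\bigl(\bigcup_j E_j\bigr)=\bigcup_j\phi^{-1}(E_j)$ and $\phi^{-1}(E_1)\subset\phi^{-1}(E_2)$ when $E_1\subset E_2$; Borel measurability of $\phi$ guarantees that $\phi^{-1}(E)$ is an admissible set whenever $E$ is Borel. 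This is the only structural point that needs verification, and it is routine.

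Next I would record the key set-theoretic identity
\begin{equation*}
\bigl\{y\in\mathbb{R}^n:|f\circ\phi(y)|>t\bigr\}=\phi^{-1}\bigl(\{x\in\mathbb{R}^n:|f(x)|>t\}\bigr),
\end{equation*}
which immediately yields
\begin{equation*}
\mu\bigl(\{y:|f\circ\phi(y)|>t\}\bigr)=\nu\bigl(\{x:|f(x)|>t\}\bigr),
\end{equation*}
and therefore $\|f\circ\phi\|_{L^{q,\infty}_\mu}=\|f\|_{L^{q,\infty}_\nu}$. Likewise, for any Borel $E$, $\mu(\phi^{-1}(E))=\nu(E)$.

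With these translations in hand, the equivalence $(1)\Leftrightarrow(2)$ of Corollary \ref{c31} becomes precisely the equivalence $(1)\Leftrightarrow(2)$ of Proposition \ref{p31} applied to $\nu$ in place of $\mu$. For the localized pair, I would use the analogous identity
\begin{equation*}
\bigl\{y\in\phi^{-1}(B(x,r)):|f\circ\phi(y)|>t\bigr\}=\phi^{-1}\bigl(\{z\in B(x,r):|f(z)|>t\}\bigr),
\end{equation*}
so that
\begin{equation*}
\mu\bigl(\{y\in\phi^{-1}(B(x,r)):|f\circ\phi(y)|>t\}\bigr)=\nu\bigl(\{z\in B(x,r):|f(z)|>t\}\bigr)
\end{equation*}
and $\mu(\phi^{-1}(B(x,r)))=\nu(B(x,r))$. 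Applying the equivalence $(3)\Leftrightarrow(4)$ of Proposition \ref{p31} to $\nu$ then delivers the required equivalence $(3)\Leftrightarrow(4)$ of Corollary \ref{c31}.

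The only potential obstacle is a pedantic measurability one, namely confirming that the sets arising on the $\mu$-side (for instance $\phi^{-1}(\{|f|>t\})$ and $\phi^{-1}(B(x,r))$) are Borel, so that both sides of the invoked inequalities make sense; but since $f$ is continuous, $\phi$ is Borel, and balls are open, each of these preimages is Borel. Hence the corollary follows from Proposition \ref{p31} with essentially no new analytic content beyond the pushforward construction.
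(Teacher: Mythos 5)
Your proposal is correct and is exactly the paper's argument: the authors likewise define the push-forward outer measure $\phi_{\ast}\mu(E)=\mu\big(\phi^{-1}(E)\big)$ and invoke Proposition \ref{p31}, with your version merely spelling out the routine set-theoretic identities and measurability checks that the paper leaves implicit.
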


\begin{proof}
Clearly, the above conclusions follow by means of applying Proposition \ref%
{p31} to the push-forward outer measure $\phi _{\ast }\mu $ defined by: 
\begin{equation*}
\phi _{\ast }\mu (E)=\mu \big(\phi ^{-1}(E)\big)\quad \forall \quad %
\hbox{Borel\ set}\ \ E\subset \mathbb{R}^{n}.
\end{equation*}
\end{proof}

\subsection{$\infty$-Besov extensions}

\label{s32} Lifting an arbitrary ${\Lambda }_{\alpha }^{p,\infty }$-function
to the upper-half space $\mathbb{R}_{+}^{1+n}=(0,\infty )\times \mathbb{R}%
^{n}$ via the heat equation, we obtain the Carleson imbedding for $\dot{%
\Lambda}_{\alpha }^{p,\infty }$ via the heat equation (which can be also
generalized to the fractional case; see \cite{XZ}).

\begin{prop}
\label{p32} Let $(\alpha ,p,q)\in (0,1)\times (1,\frac{n}{\alpha })\times
\lbrack 1,\infty )$ and $\nu $ be a nonnegative outer measure on $\mathbb{R}%
_{+}^{1+n}$. Let 
\begin{equation*}
w(t,x)=(4\pi t)^{-\frac{n}{2}}\int_{\mathbb{R}^{n}}\exp \Big(-\frac{|x-y|^{2}%
}{4t}\Big)f(y)\,dy,
\end{equation*}%
be the solution to the heat equation 
\begin{equation*}
\begin{cases}
(\partial _{t}-\Delta _{x})w(t,x)=0\quad \forall \quad (t,x)\in \mathbb{R}%
_{+}^{1+n}; \\ 
w(0,x)=f(x)\quad x\in \mathbb{R}^{n}.%
\end{cases}%
\end{equation*}%
For an open set $O\subset \mathbb{R}^{n},$ let $T(O)=\{(t,x)\in \mathbb{R}%
_{+}^{1+n}:B(x,t)\subseteq O\},$ be the tent with base $O\subset \mathbb{R}%
^{n}$. Then,

\begin{align*}
&\sup_{\lambda\in (0,\infty)}\Big(\lambda^q\nu\big(\{(t,x)\in\mathbb{R}%
^{1+n}_+: |w(t^2,x)|>\lambda\}\big)\Big)^\frac1q\lesssim \|f\|_{\dot{\Lambda}%
_\alpha^{p,\infty}}\ \ \forall\ \ f\in\Lambda_{\alpha}^{p,\infty}\cap C(%
\mathbb{R}^n) \\
&\Longleftrightarrow \nu\big(T(O)\big)\lesssim\big(C_{\alpha,p,\infty}(O)%
\big)^\frac{q}{p}\ \ \forall\ \ \hbox{open\ set}\ \ O\subset\mathbb{R}^{n}.
\end{align*}
\end{prop}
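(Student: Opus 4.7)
The plan is to establish the equivalence via the standard Carleson/capacity duality, using the admissible test functions for $C_{\alpha,p,\infty}$ in one direction and the non-tangential maximal function of the heat extension in the other.

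For the necessity direction, fix an open $O\subseteq\mathbb{R}^{n}$ and $f\in\mathsf{A}_{\alpha,p,\infty}(O)$, so that $f\ge 1$ on a neighborhood of $O$ (in particular on $O$) and $f\ge 0$ elsewhere. For $(t,x)\in T(O)$ the ball $B(x,t)\subseteq O$, and the crude lower bound $\exp(-|x-y|^{2}/(4t^{2}))\ge e^{-1/4}$ on $B(x,t)$ gives
\begin{equation*}
w(t^{2},x)\ge (4\pi t^{2})^{-n/2}e^{-1/4}|B(x,t)|=(4\pi)^{-n/2}e^{-1/4}|B(0,1)|=:c_{n}>0.
\end{equation*}
Hence $T(O)\subseteq\{(t,x):|w(t^{2},x)|>c_{n}/2\}$, and applying the assumed weak-type bound at level $c_{n}/2$ followed by infimising over $f$ yields $\nu(T(O))\lesssim (C_{\alpha,p,\infty}(O))^{q/p}$.

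For the sufficiency direction, introduce the (aperture-$1$) non-tangential maximal function of the heat extension,
\begin{equation*}
\mathcal{N}f(x)=\sup_{|y-x|<s,\ s>0}|w(s^{2},y)|,
\end{equation*}
which is lower semicontinuous. The key tent inclusion is essentially tautological: if $|w(t^{2},x_{0})|>\lambda$ then for every $y\in B(x_{0},t)$ the point $(t,x_{0})$ lies in the cone over $y$, so $\mathcal{N}f(y)\ge|w(t^{2},x_{0})|>\lambda$. Consequently $B(x_{0},t)\subseteq O_{\lambda}:=\{\mathcal{N}f>\lambda\}$, i.e. $(t,x_{0})\in T(O_{\lambda})$, and thus
\begin{equation*}
\{(t,x)\in\mathbb{R}_{+}^{1+n}:|w(t^{2},x)|>\lambda\}\subseteq T(O_{\lambda})
\end{equation*}
with $O_{\lambda}$ open. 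The Carleson hypothesis then gives $\lambda^{q}\nu(\{|w(t^{2},\cdot)|>\lambda\})\lesssim\lambda^{q}(C_{\alpha,p,\infty}(O_{\lambda}))^{q/p}$, so it remains only to establish $\lambda^{p}C_{\alpha,p,\infty}(O_{\lambda})\lesssim\|f\|_{\dot{\Lambda}_{\alpha}^{p,\infty}}^{p}$. Since $p>1$, the non-tangential maximal operator is bounded on $L^{p}$, and the translation identity $\mathcal{N}f(\cdot+h)=\mathcal{N}f_{h}(\cdot)$ (with $f_{h}=f(\cdot+h)$) together with $|\mathcal{N}f_{h}-\mathcal{N}f|\le\mathcal{N}(\Delta_{h}f)$ upgrades this to $\|\Delta_{h}\mathcal{N}f\|_{L^{p}}\lesssim\|\Delta_{h}f\|_{L^{p}}$, whence $\|\mathcal{N}f\|_{\dot{\Lambda}_{\alpha}^{p,\infty}}\lesssim\|f\|_{\dot{\Lambda}_{\alpha}^{p,\infty}}$. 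Mollifying $\mathcal{N}f/\lambda$ produces a continuous admissible function for $C_{\alpha,p,\infty}(O_{\lambda})$ with Besov norm comparable to $\|\mathcal{N}f\|_{\dot{\Lambda}_{\alpha}^{p,\infty}}/\lambda$, and Proposition \ref{p23}(1) delivers the desired capacity estimate; supremising over $\lambda$ completes the weak-type embedding.

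The main obstacle is this last step, namely the transfer of the $\dot{\Lambda}_{\alpha}^{p,\infty}$ bound from $f$ to $\mathcal{N}f$: this is what forces the restriction $p>1$ assumed in the proposition, and is the substantive analytic input beyond measure-theoretic bookkeeping. A secondary technical nuisance is the mere lower semicontinuity of $\mathcal{N}f$, which prevents its direct use as an admissible function for $C_{\alpha,p,\infty}$ and is why the mollification step appears; the outer regularity supplied by Proposition \ref{p22}(4) keeps the resulting error under control.
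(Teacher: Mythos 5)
Your argument is correct and follows essentially the same route as the paper: the same pointwise lower bound $w(t^{2},x)\gtrsim 1$ on $T(O)$ for admissible $f$ in the necessity direction, and the same non-tangential maximal function, tent inclusion $\{|w(t^{2},\cdot)|>\lambda\}\subseteq T(\{\mathcal{N}f>\lambda\})$, and transfer of the $\dot{\Lambda}_{\alpha}^{p,\infty}$ bound through the ($p>1$) maximal inequality in the sufficiency direction. The only cosmetic difference is that the paper runs the capacity estimate through the Hardy--Littlewood maximal function via the pointwise domination $\mathcal{M}_{N}f\leq c_{0}\mathcal{M}f$ rather than through $\mathcal{N}f$ itself, and it silently ignores the continuity/admissibility issue that you flag and patch by mollification.
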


\begin{proof}
Suppose that 
\begin{equation*}
\sup_{\lambda \in (0,\infty )}\Big(\lambda ^{q}\nu \big(\{(t,x)\in \mathbb{R}%
_{+}^{1+n}:|w(t^{2},x)|>\lambda \}\big)\Big)^{\frac{1}{q}}\lesssim \Vert
f\Vert _{\dot{\Lambda}_{\alpha }^{p,\infty }}\ \ \forall \ \ f\in \Lambda
_{\alpha }^{p,\infty }\cap C(\mathbb{R}^n)
\end{equation*}%
holds; then, for any given open set $O\subset \mathbb{R}^{n},$ we can pick $%
f\in \mathsf{A}_{\alpha ,p,\infty }(O)$ and find a dimensional constant $c>0$
such that 
\begin{equation*}
w(t^{2},x)\geq (4\pi t)^{-\frac{n}{2}}\int_{B(x,t)}\exp \Big(-\frac{|x-y|^{2}%
}{4t}\Big)f(y)\,dy\geq c\quad \forall \quad (t,x)\in T(O).
\end{equation*}%
Then, by the definition of $C_{\alpha ,p,\infty }(O)$, we get 
\begin{equation*}
\nu \big(T(O)\big)\lesssim \big(C_{\alpha ,p,\infty }(O)\big)^{\frac{q}{p}}.
\end{equation*}

On the other hand, suppose that the last estimate holds true for any open
set $O\subset \mathbb{R}^{n}$. Then, if $f\in \Lambda _{\alpha }^{p,\infty
}\cap C(\mathbb{R}^n)$, we let 
\begin{equation*}
\mathcal{M}_{N}f(x)=\sup_{|y-x|<t}|w(t^{2},y)|
\end{equation*}%
i.e. the non-tangential maximal function of $w(t^{2},y)$. Since $\mathcal{M}%
_{N}f$ is lower semi-continuous, the level sets $\{x\in \mathbb{R}^{n}:%
\mathcal{M}_{N}f(x)>\lambda \}$ are open for all $\lambda >0$. Moreover,
(cf. \cite{Jo}) 
\begin{equation*}
|w(t^{2},x-y)|\lesssim \Big(1+(|y|t^{-1})^{2}\Big)\mathcal{M}f(x),
\end{equation*}%
where $\mathcal{M}f$ is the standard Hardy-Littlewood maximal function of $f$%
: 
\begin{equation*}
\mathcal{M}f(x)=\sup_{r>0}|B(x,r)|^{-1}\int_{B(x,r)}|f(y)|,dy.
\end{equation*}%
Therefore, there exists a constant $c_{0}>0$ such that $\mathcal{M}_{N}f\leq
c_{0}\mathcal{M}f$. Consequently, 
\begin{align*}
\nu \big(\{(t,x)\in \mathbb{R}_{+}^{1+n}:|w(t^{2},x)|>\lambda \}\big)& \leq
\nu \Big(T\big(\{x\in \mathbb{R}^{n}:\mathcal{M}_{N}f(x)>\lambda \}\big)\Big)
\\
& \leq \nu \Big(T\big(\{x\in \mathbb{R}^{n}:\mathcal{M}f(x)>c_{0}\lambda \}%
\big)\Big) \\
& \lesssim \Big(C_{\alpha ,p,\infty }\big(\{x\in \mathbb{R}^{n}:\mathcal{M}%
f(x)>c_{0}\lambda \}\big)\Big)^{\frac{q}{p}}.
\end{align*}%
Since $\mathcal{M}$ is bounded on $L^{p}$ we see that (cf. \cite[p.3247]{W}) 
\begin{equation*}
\Vert \Delta _{h}(\mathcal{M}f)\Vert _{L^{p}}\lesssim \Vert \mathcal{M}%
(\Delta _{h}f)\Vert _{L^{p}}\lesssim \Vert \Delta _{h}f\Vert _{L^{p}}\quad
\forall \quad (p,h)\in (1,\infty )\times \mathbb{R}^{n}.
\end{equation*}%
Using Proposition \ref{p22}(1), we get 
\begin{equation*}
\Big(\lambda ^{p}C_{\alpha ,p,\infty }\big(\{x\in \mathbb{R}^{n}:\mathcal{M}%
f(x)>c_{0}\lambda \}\big)\Big)^{\frac{q}{p}}\lesssim \Vert \mathcal{M}f\Vert
_{\dot{\Lambda}_{\alpha }^{p,\infty }}^{q}\lesssim \Vert f\Vert _{\dot{%
\Lambda}_{\alpha }^{p,\infty }}^{q},
\end{equation*}%
whence 
\begin{equation*}
\lambda ^{q}\nu \big(\{(t,x)\in \mathbb{R}_{+}^{1+n}:|w(t^{2},x)|>\lambda \}%
\big)\lesssim \Vert f\Vert _{\dot{\Lambda}_{\alpha }^{p,\infty }}^{q}.
\end{equation*}
\end{proof}

\subsection{$\infty$-Besov point-wise multipliers}

In view of Proposition \ref{p31}, we can naturally deal with the following
multiplication problem for $\dot{\Lambda}_{\alpha }^{p,\infty }$.

\begin{prop}
\label{p41} Let $(\alpha ,p,q)\in (0,1)\times \lbrack 1,\frac{n}{\alpha }%
)\times \lbrack 1,\infty )$, let $\mu $ be a nonnegative outer measure on $%
\mathbb{R}^{n}$, and let $\mathsf{m}:\mathbb{R}^{n}\rightarrow \mathbb{R}$
be a Borel function. Then, one has the following implications:

\begin{enumerate}
\item 
\begin{align*}
&\mathsf{m}\in L^\infty_\mu\ \ \&\ \ \mu(E)\lesssim \big(C_{\alpha,p,%
\infty}(E)\big)^\frac{q}{p}\ \forall\ \hbox{Borel\ set}\ E\subset\mathbb{R}^n
\\
&\Longrightarrow \|\mathsf{m}f\|_{L^{q,\infty}_\mu}\lesssim \|f\|_{\dot{%
\Lambda}_\alpha^{p,\infty}}\ \ \forall\ \
f\in\Lambda_{\alpha}^{p,\infty}\cap C(\mathbb{R}^n) \\
&\Longrightarrow\sup_{t\in (0,\infty)}t^q\mu\big(\{x\in E:\ |\mathsf{m}%
(x)|>t\}\big)\lesssim \big(C_{\alpha,p,\infty}(E)\big)^\frac{q}{p}\ \forall\ %
\hbox{Borel\ set}\ E\subset\mathbb{R}^n.
\end{align*}

\item 
\begin{align*}
&\mathsf{m}\in L^\infty_\mu\ \ \&\ \ \mu\big(B(x,r)\big)\lesssim r^\frac{%
q(n-\alpha p)}{p}\ \forall\ \hbox{Euclidean\ ball}\ B(x,r)\subset\mathbb{R}^n
\\
&\Longrightarrow \sup_{(t,x,r)\in (0,\infty)\times\mathbb{R}%
^n\times(0,\infty)}\Big(t^q\mu\big(\{y\in B(x,r): |\mathsf{m}(y)f(y)|>t\}%
\big)\Big)^\frac1q\lesssim \|f\|_{\dot{\Lambda}_\alpha^{p,\infty}}\ \
\forall\ \ f\in\Lambda_{\alpha}^{p,\infty}\cap C(\mathbb{R}^n) \\
&\Longrightarrow \sup_{t\in (0,\infty)}t^q\mu\big(\{y\in B(x,r):\ |\mathsf{m}%
(y)|>t\}\big)\lesssim r^\frac{q(n-\alpha p)}{p}\ \forall\ 
\hbox{Euclidean\
ball}\ B(x,r)\subset\mathbb{R}^n.
\end{align*}
\end{enumerate}
\end{prop}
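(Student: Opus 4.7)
The plan is to reduce both parts of Proposition~\ref{p41} to the trace/restriction equivalences already established in Proposition~\ref{p31}, combined with two elementary pointwise observations about products $\mathsf{m}f$ with an admissible $f\in\mathsf{A}_{\alpha,p,\infty}(E)$: (a) the $L^\infty_\mu$-type upper bound $|\mathsf{m}(x)f(x)|\leq \|\mathsf{m}\|_{L^\infty_\mu}|f(x)|$, valid $\mu$-a.e.; and (b) the lower bound $|\mathsf{m}(x)f(x)|\geq |\mathsf{m}(x)|$ holding on $E$ (or on $B(x,r)\cap E$), which follows because any $f\in\mathsf{A}_{\alpha,p,\infty}(E)$ is continuous and majorizes $1$ on a neighbourhood of $E$.

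For the first implication of (1), observation (a) yields
\begin{equation*}
\|\mathsf{m}f\|_{L^{q,\infty}_\mu}\leq \|\mathsf{m}\|_{L^\infty_\mu}\|f\|_{L^{q,\infty}_\mu},
\end{equation*}
after which the hypothesis $\mu(E)\lesssim \bigl(C_{\alpha,p,\infty}(E)\bigr)^{q/p}$ activates $(2)\Rightarrow(1)$ of Proposition~\ref{p31}, giving $\|f\|_{L^{q,\infty}_\mu}\lesssim \|f\|_{\dot{\Lambda}^{p,\infty}_\alpha}$ and hence the desired multiplier bound. For the second implication of (1), fix a Borel set $E\subset\mathbb{R}^n$ and any $f\in\mathsf{A}_{\alpha,p,\infty}(E)$. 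By observation (b), $\{x\in E:|\mathsf{m}(x)|>t\}\subseteq \{x\in\mathbb{R}^n:|\mathsf{m}(x)f(x)|>t\}$, whence
\begin{equation*}
t^q\mu\bigl(\{x\in E:|\mathsf{m}(x)|>t\}\bigr)\leq \|\mathsf{m}f\|_{L^{q,\infty}_\mu}^q\lesssim \|f\|_{\dot{\Lambda}^{p,\infty}_\alpha}^q.
\end{equation*}
Infimizing over $f\in\mathsf{A}_{\alpha,p,\infty}(E)$ and invoking the definition of $C_{\alpha,p,\infty}(E)$ completes part~(1).

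Part~(2) follows the same template, but localized to Euclidean balls: the first implication now appeals to $(4)\Rightarrow(3)$ of Proposition~\ref{p31}, and for the second one takes $f\in\mathsf{A}_{\alpha,p,\infty}\bigl(B(x,r)\bigr)$, applies observation (b) on the ball, and concludes with Proposition~\ref{p23}(4), which gives $C_{\alpha,p,\infty}\bigl(B(x,r)\bigr)\approx r^{n-\alpha p}$, so the resulting bound $\bigl(C_{\alpha,p,\infty}(B(x,r))\bigr)^{q/p}$ matches the advertised $r^{q(n-\alpha p)/p}$.

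There is no serious obstacle; the only subtlety is the directional asymmetry between (a) and (b). Observation (a) is used to transfer weak-type bounds from $f$ to $\mathsf{m}f$, whereas observation (b) is used in the opposite direction to recover information about $\mathsf{m}$ alone from information about $\mathsf{m}f$. This asymmetry is exactly what explains why the statement is a chain of one-way implications (hypothesis $\Rightarrow$ multiplier bound $\Rightarrow$ necessary condition on $\mathsf{m}$) rather than a clean set of equivalences.
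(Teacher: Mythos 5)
Your proof is correct and takes essentially the same route as the paper's: both implications of part (1) are reduced to Proposition \ref{p31} via the two pointwise bounds $|\mathsf{m}f|\le\|\mathsf{m}\|_{L^\infty_\mu}|f|$ ($\mu$-a.e.) and $|\mathsf{m}f|\ge|\mathsf{m}|$ on $E$ for admissible $f$, followed by infimizing over $f\in\mathsf{A}_{\alpha,p,\infty}(E)$. The paper simply declares part (2) analogous, while you make the ball-localized version explicit via Proposition \ref{p23}(4); this is a presentational difference only.
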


\begin{proof}
It is enough to check Proposition \ref{p41}(1). Suppose that 
\begin{equation*}
\mathsf{m}\in L_{\mu }^{\infty }\ \ \&\ \ \mu (E)\lesssim \big(C_{\alpha
,p,\infty }(E)\big)^{\frac{q}{p}}\ \forall \ \hbox{Borel\ set}\ E\subset 
\mathbb{R}^{n}.
\end{equation*}%
Without loss of generality we may assume that $\Vert \mathsf{m}\Vert
_{L_{\mu }^{\infty }}>0$. Note that if $f\in \Lambda _{\alpha }^{p,\infty
}\cap C(\mathbb{R}^n),$ then 
\begin{equation*}
t<|\mathsf{m}(x)f(x)|\leq \Vert \mathsf{m}\Vert _{L_{\mu }^{\infty
}}|f(x)|\Rightarrow t\Vert \mathsf{m}\Vert _{L_{\mu }^{\infty }}^{-1}<|f(x)|.
\end{equation*}%
Therefore, an application of Proposition \ref{p31}(1)$\Leftrightarrow $(2),
yields 
\begin{equation*}
\Vert \mathsf{m}\Vert _{L_{\mu }^{\infty }}^{q}\Vert f\Vert _{\Lambda
_{\alpha }^{p,\infty }}^{q}\gtrsim \Vert \mathsf{m}\Vert _{L_{\mu }^{\infty
}}^{q}\left( \frac{t}{\Vert \mathsf{m}\Vert _{L_{\mu }^{\infty }}}\right)
^{q}\mu \Big(\big\{x\in \mathbb{R}^{n}:\ |f(x)|>\frac{t}{\Vert \mathsf{m}%
\Vert _{L_{\mu }^{\infty }}}\big\}\Big)\geq \Vert \mathsf{m}f\Vert _{L_{\mu
}^{q,\infty }}^{q}.
\end{equation*}

Next, we assume 
\begin{equation*}
\Vert \mathsf{m}f\Vert _{L_{\mu }^{q,\infty }}\lesssim \Vert f\Vert _{\dot{%
\Lambda}_{\alpha }^{p,\infty }}\ \ \forall \ \ f\in \Lambda _{\alpha
}^{p,\infty }\cap C(\mathbb{R}^n),
\end{equation*}
Let $E$ by a Borel set in $\mathbb{R}^{n}$, and let $g\in \mathsf{A}%
_{p,\infty ,\alpha }(E)$. Then, 
\begin{equation*}
\{x\in E:\ |\mathsf{m}(x)g(x)|>t\}\supseteq \{x\in E:\ |\mathsf{m}%
(x)|>t\}\quad \forall \quad t\in (0,\infty ).
\end{equation*}%
Whence, 
\begin{align*}
\Vert g\Vert _{\dot{\Lambda}_{\alpha }^{p,\infty }}^{q}& \gtrsim \Vert 
\mathsf{m}g\Vert _{L_{\mu }^{q,\infty }}^{q} \\
& \gtrsim \sup_{t\in (0,\infty )}t^{q}\mu \big(\{x\in E:\ |\mathsf{m}%
(x)g(x)|>t\}\big) \\
& \gtrsim \sup_{t\in (0,\infty )}t^{q}\mu \big(\{x\in E:\ |\mathsf{m}(x)|>t\}%
\big).
\end{align*}%
Taking the infimum over all such $g$ finally yields 
\begin{equation*}
\big(C_{\alpha ,p,\infty }(E)\big)^{\frac{q}{p}}\gtrsim \sup_{t\in (0,\infty
)}t^{q}\mu \big(\{x\in E:\ |\mathsf{m}(x)|>t\}\big).
\end{equation*}
\end{proof}

\begin{rem}
\label{r31} We ask wether the following implications 
\begin{align*}
& \sup_{t\in (0,\infty )}t^{q}\mu \big(\{x\in E:\ |\mathsf{m}(x)|>t\}\big)%
\lesssim \Vert \mathsf{m}\Vert _{L_{\mu }^{\infty }}^{q}\big(C_{\alpha
,p,\infty }(E)\big)^{\frac{q}{p}}\ \forall \ \hbox{Borel\ set}\ E\subset 
\mathbb{R}^{n} \\
& \Longrightarrow \mathsf{m}\in L_{\mu }^{\infty }\ \ \&\ \ \mu (E)\lesssim %
\big(C_{\alpha ,p,\infty }(E)\big)^{\frac{q}{p}}\ \forall \ \hbox{Borel\ set}%
\ E\subset \mathbb{R}^{n}
\end{align*}%
and 
\begin{align*}
& \sup_{t\in (0,\infty )}t^{q}\mu \big(\{y\in B(x,r):\ |\mathsf{m}(y)|>t\}%
\big)\lesssim r^{\frac{q(n-\alpha p)}{p}}\ \forall \ \hbox{Euclidean\ ball}\
B(x,r)\subset \mathbb{R}^{n} \\
& \Longrightarrow \mathsf{m}\in L_{\mu }^{\infty }\ \ \&\ \ \mu \big(B(x,r)%
\big)\lesssim r^{\frac{q(n-\alpha p)}{p}}\ \forall \ \hbox{Euclidean\ ball}\
B(x,r)\subset \mathbb{R}^{n}
\end{align*}%
are true or not? Obviously, if the essential lower bound of the $L_{\mu
}^{\infty }$-function $\mathsf{m}$ is positive, then the answer to the above
questions is affirmative.

Moreover, if $\mu $ is a nonnegative Radon measure, then \cite[Proposition
5.1]{CNF} implies 
\begin{align*}
& \sup_{t\in (0,\infty )}t^{q}\mu \big(\{x\in E:\ |\mathsf{m}(x)|>t\}\big) \\
& \leq \sup_{0<\mu (E)<\infty }\mu (E)\left( \int_{E}|\mathsf{m}|^{r}\,\frac{%
d\mu }{\mu (E)}\right) ^{\frac{q}{r}} \\
& \leq \left( \frac{q}{q-r}\right) ^{\frac{q}{r}}\sup_{t\in (0,\infty
)}t^{q}\mu \big(\{x\in E:\ |\mathsf{m}(x)|>t\}\big)\quad \forall \quad r\in
(0,q).
\end{align*}%
Letting $r\rightarrow 0,$ and using \cite[Exercise 6.117 (b)]{G}, we get 
\begin{align*}
& \sup_{t\in (0,\infty )}t^{q}\mu \big(\{x\in E:\ |\mathsf{m}(x)|>t\}\big) \\
& \leq \sup_{0<\mu (E)<\infty }\mu (E)\exp \left( \int_{E}\log |\mathsf{m}%
|^{q}\,\frac{d\mu }{\mu (E)}\right) \\
& \leq e^{1/p}\sup_{t\in (0,\infty )}t^{q}\mu \big(\{x\in E:\ |\mathsf{m}%
(x)|>t\}\big).
\end{align*}%
Thus, 
\begin{align*}
& \sup_{t\in (0,\infty )}t^{q}\mu \big(\{x\in E:\ |\mathsf{m}(x)|>t\}\big)%
\lesssim \Vert \mathsf{m}\Vert _{L_{\mu }^{\infty }}^{q}\big(C_{\alpha
,p,\infty }(E)\big)^{\frac{q}{p}}\ \forall \ \hbox{Borel\ set}\ E\subset 
\mathbb{R}^{n} \\
& \Longrightarrow \sup_{0<\mu (E)<\infty }\mu (E)\exp \left( \int_{E}\log | 
\mathsf{m}|^{q}\,\frac{d\mu }{\mu (E)}\right) \lesssim \big(C_{\alpha
,p,\infty }(E)\big)^{\frac{q}{p}}.
\end{align*}%
Similarly, one has 
\begin{align*}
& \sup_{t\in (0,\infty )}t^{q}\mu \big(\{y\in B(x,r):\ |\mathsf{m}(y)|>t\}%
\big)\lesssim r^{\frac{q(n-\alpha p)}{p}}\ \forall \ \hbox{Euclidean\ ball}\
B(x,r)\subset \mathbb{R}^{n} \\
& \Longrightarrow \sup_{(x,r)\in \mathbb{R}^{n}\times (0,\infty )}\mu \big(%
B(x,r))\exp \left( \int_{B(x,r)}\log |\mathsf{m}|^{q}\,\frac{d\mu }{\mu \big(%
B(x,r)\big)}\right) \lesssim r^{\frac{q(n-\alpha p)}{p}}.
\end{align*}
\end{rem}

\end{document}